\definecolor{codegray}{gray}{0.5}
\definecolor{codegreen}{rgb}{0,0.6,0}
\definecolor{codeblue}{rgb}{0.2,0.2,0.6}
\definecolor{backcolour}{rgb}{0.95,0.95,0.95}
\lstdefinestyle{mystyle}{
	backgroundcolor=\color{backcolour},
	commentstyle=\color{codegreen},
	keywordstyle=\color{codeblue},
	numberstyle=\tiny\color{codegray},
	stringstyle=\color{codeblue},
	basicstyle=\small\ttfamily,
	breakatwhitespace=false,
	breaklines=true,
	captionpos=b,
	keepspaces=true,
	numbers=left,
	numbersep=5pt,
	showspaces=false,
	showstringspaces=false,
	showtabs=false,
	tabsize=2
}
\newtheorem{theorem}{Theorem}[section]
\newtheorem{lemma}[theorem]{Lemma}
\newtheorem{prop}[theorem]{Proposition}
\theoremstyle{remark}
\newtheorem{exam}[theorem]{Example}
\def\A{\mathcal{A}}
\makeatletter \@addtoreset{equation}{section} \makeatother
\makeindex \setcounter{tocdepth}{2}
\renewcommand\ell{l}
\begin{document}
	\begin{center}
		{\Large\bf Characterizing Nice Partition of Graphical Arrangements}\\ [7pt]
	\end{center}

	\vskip 3mm
	\begin{center}
 Weikang Liang$^{1}$, Suijie Wang$^{2}$ and Chengdong Zhao$^{*3}$\\[8pt]
 $^{1,2}$School of Mathematics\\
 Hunan University\\
 Changsha 410082, Hunan, P. R. China\\[12pt]

 $^{3}$
 School of Mathematics and Statistics\\
 Central South University\\
 Changsha 410083, Hunan, P. R. China\\[15pt]

 Emails: $^{1}$kangkang@hnu.edu.cn, $^{2}$wangsuijie@hnu.edu.cn,  $^{3}$cdzhao@csu.edu.cn\\[15pt]

\end{center}

\vskip 3mm
	\begin{abstract}
The successive works of Terao as well as Stanley revealed that, for graphical arrangements,
supersolvability and the existence of nice partitions are equivalent properties, both characterized by chordal graphs.
In this paper, we further prove that every nice partition of a graphical arrangement arises precisely from a maximal modular chain in its intersection lattice.
Moreover, we establish two converses to classical results of Orlik and Terao on nice partitions.
\vskip 6pt

\noindent
{\bf Mathematics Subject Classification: } 05B35, 52C35
\\ [7pt]
{\bf Keywords:}
Graphical arrangement, chordal graph, nice partition,
 supersolvable arrangement
\end{abstract}
	
\section{Introduction}

In the theory of hyperplane arrangements, supersolvability, freeness, and the existence of nice partitions are the most crucial properties that enable the complete factorization of characteristic polynomials. Supersolvable arrangements were initially introduced by Stanley \cite{PRS1972} subsequent to the study of supersolvable lattices. Free arrangements were introduced by Terao \cite{HT1980} to study the freeness of the derivation module. Terao \cite{HT1992} further proposed the concept of nice partitions of a hyperplane arrangement to explore the decomposition of the Orlik-Solomon algebra. It is well-known that the supersolvability of an arrangement implies both the freeness and the existence of nice partitions \cite{POHT1992}. However, the converse statements are not generally true. This means that for hyperplane arrangements, neither freeness nor the existence of nice partitions ensures supersolvability. Naturally, we may ask what conditions would make the converse statements hold. Further, it is worth investigating the conditions under which the above three properties are equivalent.

The supersolvability and nice partitions of an arrangement are determined by its combinatorics, i.e., its intersection lattice. However, Terao's conjecture \cite{HT1983}, which states that the freeness of an arrangement is characterized only by its combinatorics, remains a long-standing problem. When it comes to graphical arrangements, the combinatorics includes not only the lattice structure of arrangements, but also the combinatorial structure of graphs. Edelman and Reiner \cite{PV1994} demonstrated that a graphical arrangement is free if and only if the corresponding graph is chordal. Stanley in his well-known book \cite{PRS2007} presented that a graphical arrangement is supersolvable if and only if the graph is chordal. The following theorem, attributed to Stanley as stated in \cite[Theorem 4.1]{Bailey}, asserts that a graphical arrangement admits a nice partition precisely when the corresponding graph is chordal.
As no explicit one appears to be available in the literature to the best of our knowledge, we will include a short proof for completeness.
Therefore, for graphical arrangements, supersolvability, freeness, and the existence of nice partitions are equivalent properties, all characterized by chordal graphs.
For precise notation and terminology, we refer the reader to Section~\ref{sec-2}.

\begin{theorem}\label{chordal-nice}
Let $G$ be a simple graph.
The hyperplane arrangement $\A_{G}$ admits a nice partition if and only if $G$ is chordal.
\end{theorem}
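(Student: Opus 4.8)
The plan is to work throughout with the combinatorial model of the intersection lattice $L(\A_G)$, whose nonzero flats correspond to partitions $P$ of $V(G)$ into vertex sets inducing connected subgraphs, and where $\A_X$ is the set of hyperplanes $x_a=x_b$ with $\{a,b\}$ an edge having both endpoints in a single block of $P$. I use the criterion recalled in Section~\ref{sec-2} that a partition $\pi$ of $\A_G$ is nice precisely when it is independent and, for every flat $X\neq\hat 0$, the induced partition $\pi_X$ has at least one singleton block; I also use the classical fact of Orlik and Terao that a nice partition satisfies $|\pi_X|=\rk(X)$ for every $X\in L(\A_G)$.

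For the direction \emph{$G$ chordal $\Rightarrow$ nice partition}, I would fix an ordering $1<2<\cdots<n$ of $V(G)$ that reverses a perfect elimination ordering, so that for each $j$ the set of earlier neighbours $\{\,i<j:\{i,j\}\in E(G)\,\}$ is a clique; such an ordering exists exactly because $G$ is chordal. Assign each edge to its larger endpoint, giving blocks $B_j=\{\,x_i=x_j: i<j,\ \{i,j\}\in E(G)\,\}$, and let $\pi$ consist of the nonempty $B_j$. Independence is automatic: a transversal selects hyperplanes with pairwise distinct largest indices $j$, so the normals $e_j-e_i$ form a triangular, hence linearly independent, system, and one checks $|\pi|=n-c=\rk(\A_G)$ component by component. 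To verify the singleton condition, let $X\neq\hat 0$ correspond to a connected partition $P$ and choose a block $S$ of $P$ containing at least one edge (one exists since $\A_X\neq\emptyset$). Writing the vertices of $S$ as $s_1<\cdots<s_m$, let $k$ be least such that $s_k$ has an earlier neighbour inside $S$. Minimality forces $s_1,\dots,s_{k-1}$ to be pairwise non-adjacent, while the reverse elimination property forces the earlier neighbours of $s_k$ to be a clique; a clique contained in an independent set has at most one vertex, so $s_k$ has exactly one earlier neighbour in $S$. Hence $B_{s_k}\cap\A_X$ is a single hyperplane, the required singleton block of $\pi_X$.

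For the converse I argue contrapositively: if $G$ is not chordal it contains an induced cycle $v_1v_2\cdots v_k$ with $k\ge 4$ and edges $f_i=\{v_i,v_{i+1}\}$ (indices mod $k$). Suppose toward a contradiction that $\A_G$ had a nice partition $\pi$. Let $Y$ be the flat merging $v_1,\dots,v_k$; since the cycle is chordless, $\A_Y=\{f_1,\dots,f_k\}$, and the Orlik--Terao rank formula gives $|\pi_Y|=\rk(Y)=k-1$. By pigeonhole two of the cycle edges, say $f_a$ and $f_b$, lie in the same block of $\pi$. Now consider the flat $X=f_a\cap f_b$. Because the cycle has no chords, merging the at most three endpoints of $f_a$ and $f_b$ produces no further forced coincidences among edges of $G$, so $\A_X=\{f_a,f_b\}$ exactly; as these two hyperplanes lie in one block, $\pi_X$ is a single block of size two with no singleton, contradicting niceness. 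Hence $\A_G$ admits no nice partition.

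I expect the main obstacle to be the forward singleton argument: everything hinges on decomposing the edges along a reverse perfect elimination ordering so that the tension between ``earlier neighbours form a clique'' and ``a reconnecting prefix is independent'' pins the offending block down to size one. The converse is comparatively routine once one isolates the rank-two flat $X=f_a\cap f_b$ and uses chordlessness to compute $\A_X$, together with the rank count for nice partitions.
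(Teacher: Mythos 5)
Your proof is correct, but it takes a genuinely different route from the paper's, most substantially in the forward direction. For ``chordal $\Rightarrow$ factored'' the paper constructs nothing: it combines Propositions~\ref{graphthy4.1} and~\ref{stanley4.10} (chordal $\Leftrightarrow$ simplicial elimination ordering $\Leftrightarrow$ supersolvable) with the Orlik--Terao result that supersolvable arrangements admit nice partitions. You instead build the partition explicitly from a reverse perfect elimination ordering (assigning each edge to its larger endpoint) and verify both axioms by hand: independence via the triangular structure of the normals $e_j-e_i$, and the singleton condition via the tension between ``earlier neighbours form a clique'' and ``the vertices of $S$ below the first reconnection are pairwise non-adjacent.'' This is more self-contained --- it bypasses supersolvability and the citation to \cite[Proposition~2.67]{POHT1992} entirely --- and the partition you construct is precisely the one induced by a maximal modular chain, which in fact foreshadows Theorem~\ref{main3}. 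For the converse, both arguments hinge on the same key computation: for two edges $f_a,f_b$ of a chordless cycle, the rank-two flat $X=f_a\cap f_b$ satisfies $\mathcal{A}_X=\{f_a,f_b\}$. The paper uses the singleton condition at these flats to force all $k$ cycle edges into \emph{distinct} parts, obtaining a $k$-section whose required independence contradicts the cycle relation; you run the mirror-image argument, using the Orlik--Terao count $|\pi_Y|=r(Y)=k-1$ together with pigeonhole to force two cycle edges into the \emph{same} part, and then contradict the singleton condition at $X$. Thus your converse never invokes independence of sections (but does invoke the rank formula), while the paper's does the opposite; you also dispense with the reduction to doubly connected blocks (Lemma~\ref{lem:1}), which is indeed inessential for this direction. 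One wording slip: two disjoint edges of the cycle have four endpoints, not ``at most three''; however, your conclusion $\mathcal{A}_X=\{f_a,f_b\}$ holds in both the adjacent case (the would-be third edge is a chord) and the disjoint case (each block of size two contains a single edge), so nothing breaks.
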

It is worth noting that every maximal modular chain in a supersolvable arrangement induces a nice partition  \cite[Proposition~2.67, p.~50]{POHT1992}; however, the converse does not hold in general, as demonstrated by \cite[Example 3.19]{HR2016}.
In this paper, we focus on graphical arrangements and prove that the converse is true in this case; namely, all nice partitions of a graphical arrangement arise from maximal modular chains. Based on Theorem \ref{chordal-nice}, we establish the following main result.

\begin{theorem}\label{main3}
	Let $\mathcal{A}$ be a graphical arrangement in $V$ of rank $r$,
	and $L(\mathcal{A})$ denote its intersection lattice.
	Suppose that $\pi$ is a nice partition of $\mathcal{A}$.
	There is a maximal modular chain $V= X_0 < X_1 < \dots < X_{r} = T$ of $L(\A)$ such that
	the partition $\pi = \{\pi_1, \ldots, \pi_{r}\}$ is obtained by $\pi_i = \A_{X_{i}}\backslash \A_{X_{i-1}}$ for  $1\le i \le r$.
\end{theorem}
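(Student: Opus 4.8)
The plan is to translate everything into the combinatorics of $G$ and then peel off one modular coatom at a time. Throughout I identify $L(\A)=L(\A_G)$ with the lattice of those set partitions of $V(G)$ whose blocks induce connected subgraphs, ordered by refinement; under this identification $\A_X$ is the set of edges lying inside a single block of $X$, the flats of $L(\A)$ are exactly the flats of the cycle matroid of $G$, and $\rk X=|V(G)|-(\text{number of blocks of }X)$. Since $\pi_i=\A_{X_i}\setminus\A_{X_{i-1}}$ forces $\A_{X_i}=\pi_1\cup\dots\cup\pi_i$ and hence $X_i=\bigcap_{H\in\pi_1\cup\dots\cup\pi_i}H$, the theorem amounts to \emph{ordering} the blocks of $\pi$ so that the partial unions $\A_{X_i}$ are flats of strictly increasing rank and each $X_i$ is modular. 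I first reduce to $G$ connected by treating the components separately and concatenating their chains.

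The key structural input is a description of the blocks. Using only the definition of a nice partition at rank-two flats, I would show that \emph{every block of $\pi$ is a clique-star}: all edges of a block share a common vertex $c_i$ (its center) and the other endpoints $L_i$ form a clique, so that $\{c_i\}\cup L_i$ is a clique of $G$. Indeed, if two vertex-disjoint edges lay in one block, the rank-two flat they span would meet only that block, contradicting $|\pi_X|=\rk X=2$; the same count at the flat $\{a,b,c\}$ of an induced path $a-b-c$ rules out two adjacent edges $ab,bc$ in one block unless $ac\in E(G)$, and at a triangle it rules out a block consisting of all three edges. Pairwise-intersecting edge families that are not triangles are stars, so each block is a clique-star, and applying the path condition to two leaves forces $L_i$ to be a clique.

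With this in hand I set up an induction on $\rk\A$, using two standard facts: nice partitions localize (for any flat $Y$ the partition induced on $\A_Y$ is nice, since sections of $(\pi|_{\A_Y})_Z$ are sections of $\pi_Z$ for $Z\le Y$), and, $G$ being chordal, $\A_G$ is supersolvable so $L(\A)$ possesses modular coatoms. The inductive step is to locate a block $\pi_j$ coinciding with the crossing-edge set of a \emph{modular coatom} $X_{r-1}=A\mid B$, that is, a partition of $V(G)$ into two connected parts all of whose joining edges are incident to one common vertex (this star condition is exactly the criterion for such a two-block partition to be modular). Taking such a $\pi_j$ as the top block, localization at $X_{r-1}$ yields a nice partition, with blocks $\pi\setminus\{\pi_j\}$, of $\A_{X_{r-1}}=\A_{G[A]}\oplus\A_{G[B]}$, the graphical arrangement of the chordal graph $G[A]\sqcup G[B]$ of smaller rank; the inductive hypothesis provides a maximal modular chain of $[\hat{0},X_{r-1}]$ realizing these blocks, its elements remain modular in $L(\A)$ because $X_{r-1}$ is itself modular, and appending $X_{r-1}<T$ completes a chain realizing $\pi$ with $\pi_r=\pi_j$.

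The main obstacle is precisely the existence of such a peelable block. Being a clique-star is not enough: a star block need not be a cut of $G$ (for instance an edge interior to a triangle), so one must exhibit a block whose removal genuinely disconnects its center from its leaves. I expect to prove existence by induction along a clique tree of $G$: at a leaf clique one finds a simplicial vertex $u$, and either the entire star at $u$ is a single block, giving the modular coatom $\{u\}\mid(V(G)\setminus u)$, or the way that star splits across blocks, constrained by the clique structure of $N(u)$ and by the exact rank counts $|\pi_Y|=\rk Y$ that niceness imposes at every flat $Y$, pins down a smaller clique-branch separated by one block. Making this dichotomy precise, and checking that it terminates, is the technical heart of the argument; it is exactly here that graphicality is indispensable, since for general supersolvable arrangements nice partitions need not arise from modular chains, as \cite[Example 3.19]{HR2016} shows.
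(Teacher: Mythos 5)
Your reduction to connected $G$, the identification of $L(\A_G)$ with connected vertex partitions, and your structural claim that every part of $\pi$ is a clique-star are all correct; the latter is proved essentially as in the paper (Lemmas~\ref{lem1}, \ref{lem2} and part (1) of Lemma~\ref{mainprop1}), by rank counts at rank-two flats. The peeling framework itself (find a modular coatom whose crossing edges form one part, localize, induct in the interval, lift modularity) would also assemble into a valid induction if its hypothesis were supplied. The genuine gap is the step you yourself flag as ``the technical heart'': you never prove that some part $\pi_j$ equals the \emph{entire} set of edges joining $A$ to $B$ for some coatom $A \mid B$ of $L(\A_G)$. The clique-tree dichotomy is only announced (``I expect to prove existence\ldots''), not carried out, and nothing you have established excludes the possibility that every part, although a clique-star, fails to be a full crossing set. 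Without this existence claim the induction never starts, so the proposal is an incomplete proof rather than a proof.

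It is worth noting how the paper fills exactly this hole, since it does so by a global argument rather than a coatom-peeling induction. After reducing to \emph{doubly connected} blocks via Lemma~\ref{lem:1} --- not merely to connected components; this matters, because for a cut vertex $v$ of two triangles glued at $v$ a nice partition can have two distinct parts both centered at $v$, so any distinct-centers statement fails for merely connected graphs --- the paper proves that distinct parts have distinct centers (part (2) of Lemma~\ref{mainprop1}, where double connectivity is essential), orients every edge away from the center of its part, shows the orientation is acyclic because a directed cycle would yield a dependent section of $\pi$, and takes a topological ordering. The source vertex of that ordering then has all of its incident edges in the single part centered at it: this is precisely your ``peelable block'', and in fact the entire modular chain is read off the ordering in one pass. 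Separately, your parenthetical claim that the star condition on the crossing edges ``is exactly the criterion'' for a coatom $A \mid B$ to be modular is false as stated: by Brylawski's criterion (Lemma~\ref{2.1}) one also needs the leaves of the star to induce a clique, since otherwise two star edges with non-adjacent leaves span a rank-two flat disjoint from $\A_{A\mid B}$. Because your parts are clique-stars this slip is repairable, but the existence gap above is not.
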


In the second part of this paper, we focus on characterizing nice partitions in a more general setting. Specifically, we prove two converses of results due to Orlik and Terao. We first establish the converse of \cite[Corollary 3.88, p.~85]{POHT1992}, thereby providing an equivalent characterization of nice partitions. The statement is as follows.

\begin{theorem}\label{main2}
	Let $\mathcal{A}$ be a hyperplane arrangement in $V$,
	and $L(\mathcal{A})$ denote its intersection lattice.
	Assume that $\pi = \{\pi_1, \ldots, \pi_{\ell}\}$ is a set partition of $\mathcal{A}$.
	Then, $\pi$ is nice if and only if for each $X \in L(\mathcal{A})$,
	\begin{equation}\label{eq:Ter}
			\chi(\mathcal{A}_X, t) = t^{n-\ell} \prod_{i=1}^{\ell} \big(t - |\pi_i \cap \mathcal{A}_X|\big)
	\end{equation}
\end{theorem}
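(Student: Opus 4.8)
The plan is to handle the two directions separately. The forward implication, that a nice partition forces the factorization \eqref{eq:Ter} at every flat, is precisely \cite[Corollary~3.88, p.~85]{POHT1992}, so the entire content lies in the converse. For the converse I would use the standard reformulation of niceness: writing $r(X)=\codim X$ for the rank of a flat $X$, the partition $\pi$ is nice if and only if, for every $X\in L(\A)$, the number of blocks meeting the localization $\A_X$ equals $r(X)$, that is,
\[
	m_X := \#\{\, i : \pi_i \cap \A_X \neq \emptyset \,\} = r(X).
\]
Thus it suffices to recover the invariant $m_X$ from the polynomial identity \eqref{eq:Ter}.

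The key observation is that $m_X$ is visible on both sides of \eqref{eq:Ter} as the order of vanishing at $t=0$. On the right-hand side, every block with $\pi_i\cap\A_X=\emptyset$ contributes a factor $t$, whereas every block with $\pi_i\cap\A_X\neq\emptyset$ contributes a factor $t-|\pi_i\cap\A_X|$ that is nonzero at $t=0$; collecting these gives
\[
	t^{\,n-\ell}\prod_{i=1}^{\ell}\bigl(t-|\pi_i\cap\A_X|\bigr)
	= t^{\,n-m_X}\!\!\prod_{i:\,\pi_i\cap\A_X\neq\emptyset}\!\!\bigl(t-|\pi_i\cap\A_X|\bigr),
\]
so the order of vanishing of the right-hand side at $t=0$ equals $n-m_X$. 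On the left-hand side, the localization $\A_X$ is a central arrangement of rank $r(X)$, whence $\chi(\A_X,t)=t^{\,n-r(X)}\bar\chi(t)$ with $\bar\chi(0)\neq 0$, and the order of vanishing at $t=0$ equals $n-r(X)$. Comparing the two multiplicities in \eqref{eq:Ter} yields $m_X=r(X)$ for every $X\in L(\A)$.

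It remains to confirm that this block-count identity delivers the full definition of a nice partition, in particular the independence of transversals. Fix $X\in L(\A)$, let $S=\{\,i:\pi_i\cap\A_X\neq\emptyset\,\}$, and choose $H_i\in\pi_i\cap\A_X$ for $i\in S$. Setting $Y=\bigcap_{i\in S}H_i\supseteq X$, every hyperplane $H_i$ lies in $\A_Y$, so all $|S|$ blocks meet $\A_Y$ and $r(Y)=m_Y\ge|S|=m_X=r(X)$; since $Y\supseteq X$ forces $r(Y)\le r(X)$, we get $\codim Y=|S|$, i.e.\ the chosen transversal is linearly independent. As this holds for every $X$, the partition $\pi$ is nice, completing the converse.

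The step I expect to be the crux is the left-hand multiplicity computation, namely the claim that the reduced characteristic polynomial $\bar\chi$ of the central arrangement $\A_X$ does not vanish at $t=0$. This is exactly where the geometric-lattice structure of $L(\A_X)$ enters, through the nonvanishing of the Möbius invariant $\mu(\hat 0,\hat 1)\neq 0$ over the interval $[\hat 0,X]$; granting this, reading off $n-r(X)$ as the order of vanishing and matching it with $n-m_X$ is immediate, and everything else is bookkeeping.
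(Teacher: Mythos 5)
Your forward direction is fine, and your order-of-vanishing computation at $t=0$ correctly extracts $m_X := \#\{i : \pi_i \cap \A_X \neq \emptyset\} = r(X)$ from \eqref{eq:Ter}; that part parallels the paper, which cites the same fact (that $t^{\,n-r(X)}$ is the exact power of $t$ dividing $\chi(\A_X,t)$) to obtain independence. The genuine gap is your opening claim that niceness is \emph{equivalent} to the block-count identity $m_X = r(X)$ for all $X$. That reformulation is false: by definition, niceness requires, besides independence, that for every $X \in L(\A)\setminus\{V\}$ some block satisfies $|\pi_i \cap \A_X| = 1$, and this singleton condition does not follow from $m_X=r(X)$. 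Concretely, take four distinct lines $H_1,\dots,H_4$ through the origin in $\mathbb{R}^2$ and $\pi = \{\{H_1,H_2\},\{H_3,H_4\}\}$. Then $m_X = r(X)$ for every flat and $\pi$ is independent, yet at the top flat $T$ neither block meets $\A_T = \A$ in a single hyperplane, so $\pi$ is not nice; consistently, \eqref{eq:Ter} fails there, since
\[
\chi(\A,t) = (t-1)(t-3) \neq (t-2)^2 .
\]
So your argument proves independence but never verifies the singleton condition, and it \emph{cannot} be verified from the $t=0$ multiplicity data alone: that data is strictly weaker than the full factorization \eqref{eq:Ter}.

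The repair is to use \eqref{eq:Ter} at a second point, which is exactly what the paper does. For $X \neq V$ the localization $\A_X$ is a nonempty central arrangement, so $\chi(\A_X,1) = 0$; substituting $t=1$ into \eqref{eq:Ter} forces $1 - |\pi_i \cap \A_X| = 0$ for some $i$, i.e., some block meets $\A_X$ in exactly one hyperplane. With that step added, your proof becomes essentially the paper's: the $t=1$ evaluation yields the singleton condition, and the $t=0$ multiplicity comparison (your $m_X = r(X)$, applied to $X = \cap S$ for an arbitrary section $S$) yields independence.
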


Next we prove the converse of \cite[Proposition~2.67, p.~50]{POHT1992} and establish the following theorem, showing that if $\mathcal{A}$ has a nice partition induced by a maximal chain, then $\mathcal{A}$ must be supersolvable.

\begin{theorem}\label{main1}
	Let $\mathcal{A}$ be a hyperplane arrangement in $V$ of rank $r$,
	and $L(\mathcal{A})$ denote its intersection lattice. Assume that $\mathcal{C}\colon V = X_0 < X_1 < \dots < X_{r} = T$ is a maximal chain of $L(\mathcal{A})$, and let $\pi = \{\pi_1, \ldots, \pi_{r}\}$ be the set partition of $\mathcal{A}$ defined by $\pi_i = \mathcal{A}_{X_{i}} \setminus \mathcal{A}_{X_{i-1}}$ for $i=1, \ldots, r$.
	Then $\pi$ is a nice partition if and only if $\mathcal{C}$ is a modular chain.
\end{theorem}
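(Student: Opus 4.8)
The plan is to prove the substantive implication ``$\pi$ nice $\Rightarrow \mathcal{C}$ modular'' by induction on the rank $r$, peeling off the coatom $X_{r-1}$; the reverse implication ``$\mathcal{C}$ modular $\Rightarrow \pi$ nice'' is exactly \cite[Proposition~2.67, p.~50]{POHT1992}, since a modular maximal chain witnesses the supersolvability of $\mathcal{A}$. Throughout I shall use the characterization furnished by Theorem~\ref{main2}: $\pi$ is nice if and only if $\chi(\mathcal{A}_X,t)=t^{n-r}\prod_{i=1}^{r}\big(t-|\pi_i\cap\mathcal{A}_X|\big)$ for every $X\in L(\mathcal{A})$. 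The base case $r=1$ is immediate, as the chain $\hat0<\hat1$ is automatically modular.

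First I record the reduction that feeds the induction. Because $\mathcal{A}_{X_{r-1}}$ consists of exactly the hyperplanes through $X_{r-1}$, one checks that $\mathcal{A}_Y=(\mathcal{A}_{X_{r-1}})_Y$ for every $Y\le X_{r-1}$, that $\pi_i\cap\mathcal{A}_{X_{r-1}}=\pi_i$ for $i\le r-1$, and that $\pi_r\cap\mathcal{A}_Y=\emptyset$ for all such $Y$. Consequently the factor $(t-|\pi_r\cap\mathcal{A}_Y|)$ equals $t$, and the niceness identity for $\mathcal{A}$ restricts to the niceness identity for the rank-$(r-1)$ localization $\mathcal{A}_{X_{r-1}}$ equipped with $\pi'=(\pi_1,\dots,\pi_{r-1})$. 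Since this $\pi'$ is precisely the partition induced by the chain $X_0<\cdots<X_{r-1}$ in $L(\mathcal{A}_{X_{r-1}})=[\hat0,X_{r-1}]$, the inductive hypothesis shows that $X_0<\cdots<X_{r-1}$ is a modular chain of $[\hat0,X_{r-1}]$.

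The heart of the argument is to show that $X_{r-1}$ is itself a modular coatom of $L(\mathcal{A})$. I will invoke the classical matroid criterion that a coatom $Z$ is modular if and only if every rank-$2$ flat $W\not\le Z$ satisfies $\rk(W\wedge Z)=1$, that is, for all distinct $H_1,H_2\in\mathcal{A}$ not both in $\mathcal{A}_Z$ there exists $H_3\in\mathcal{A}_Z$ with $H_1\cap H_2\subseteq H_3$. For $Z=X_{r-1}$ we have $\mathcal{A}_Z=\pi_1\cup\cdots\cup\pi_{r-1}$ and $\mathcal{A}\setminus\mathcal{A}_Z=\pi_r$; if one of $H_1,H_2$ already lies in $\mathcal{A}_Z$ that hyperplane serves as $H_3$. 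The remaining case $H_1,H_2\in\pi_r$ is where niceness is used: applying Theorem~\ref{main2} at $W=H_1\cap H_2$ forces $t^{n-r}\prod_i\big(t-|\pi_i\cap\mathcal{A}_W|\big)$ to equal $\chi(\mathcal{A}_W,t)=t^{n-2}(t-1)\big(t-(|\mathcal{A}_W|-1)\big)$, whence exactly two blocks meet $\mathcal{A}_W$, with sizes $1$ and $|\mathcal{A}_W|-1$. As $H_1,H_2\in\pi_r\cap\mathcal{A}_W$, the block $\pi_r$ must be the larger one, so the singleton block $\pi_{i_0}$ with $i_0\le r-1$ contributes a hyperplane $H_3\in\mathcal{A}_{X_{r-1}}$ through $H_1\cap H_2$. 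Thus $X_{r-1}$ is modular in $L(\mathcal{A})$.

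Finally I upgrade modularity in the interval to modularity in the full lattice. For each $j\le r-1$ the element $X_j$ is modular in $[\hat0,X_{r-1}]$, while $X_{r-1}$ is modular in $L(\mathcal{A})$, and I will deduce that $X_j$ is modular in $L(\mathcal{A})$ by a rank computation. For arbitrary $Y\in L(\mathcal{A})$ set $P=X_j\vee Y$; the modular-pair identity for the modular element $X_{r-1}$ (using $X_j\le X_{r-1}$) gives $(X_j\vee Y)\wedge X_{r-1}=X_j\vee(X_{r-1}\wedge Y)$, and feeding this into the modularity of $X_j$ within $[\hat0,X_{r-1}]$ applied to the pair $(X_j,X_{r-1}\wedge Y)$, together with the modular identities of $X_{r-1}$ evaluated at $Y$ and at $P$, yields $\rk(X_j\vee Y)+\rk(X_j\wedge Y)=\rk(X_j)+\rk(Y)$. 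Since $X_0=\hat0$ and $X_r=\hat1$ are trivially modular, every $X_j$ is modular, so $\mathcal{C}$ is a modular chain, completing the induction. The main obstacle is the coatom step: selecting the right matroid criterion for a modular coatom and extracting from the rank-$2$ instances of Theorem~\ref{main2} exactly the concurrence of pairs of hyperplanes in $\pi_r$; the interval-to-lattice transfer, though it needs care, is a routine consequence of the modular law for $X_{r-1}$.
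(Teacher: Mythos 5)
Your proof is correct, but it follows a genuinely different route from the paper's. For the substantive direction ($\pi$ nice $\Rightarrow$ $\mathcal{C}$ modular) the paper argues directly, with no induction: Lemma~\ref{2.1} (a reformulation of Brylawski's theorem) says that $X\in L(\mathcal{A})$ is modular if and only if $\mathcal{A}_X\cap\mathcal{A}_Y\neq\emptyset$ for every $Y$ of rank $r-r(X)+1$; for $X=X_k$ and such a $Y$, the localized partition $\pi_Y$ is nice, so $|\pi_Y|=r(Y)=r-k+1$ by \cite[Corollary~3.90, p.~85]{POHT1992}, and since only $r-k$ parts of $\pi$ have index exceeding $k$, the pigeonhole principle produces a part $\pi_i$ with $i\le k$ meeting $\mathcal{A}_Y$; as $\pi_i\subseteq\mathcal{A}_{X_k}$, this verifies the modularity criterion for every $X_k$ simultaneously. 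You instead induct on the rank: you verify modularity only for the coatom $X_{r-1}$, using the coatom instance of the same Brylawski-type criterion (a coatom is modular iff it meets every rank-two flat) and extracting the needed hyperplane from the factorization $\chi(\mathcal{A}_W,t)=t^{n-2}(t-1)\bigl(t-|\mathcal{A}_W|+1\bigr)$ of rank-two localizations --- your handling here is sound, including the implicit exclusion of $|\mathcal{A}_W|=2$ because $\pi_r$ already contributes two hyperplanes to $\mathcal{A}_W$ --- and you then lift the modularity of $X_0<\cdots<X_{r-1}$ from the interval $[V,X_{r-1}]=L(\mathcal{A}_{X_{r-1}})$ to $L(\mathcal{A})$ via a transfer lemma (an element modular in the interval below a modular element is modular in the whole lattice), whose rank computation you sketch correctly. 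The trade-off: the paper's proof is shorter and uniform over all ranks, at the price of quoting the full strength of Brylawski's characterization together with $|\pi_X|=r(X)$; yours needs only the coatom criterion, elementary facts about rank-two arrangements, and the known direction of Theorem~\ref{main2}, but pays for the induction with the interval-to-lattice transfer, a nontrivial (if standard) piece of lattice theory that the paper's argument avoids entirely. Both treatments dispose of the converse direction the same way, by citing \cite[Proposition~2.67, p.~50]{POHT1992}.
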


The organization of this paper is as follows. In section \ref{sec-2}, we recall some concepts and notation in hyperplane arrangements and some basic results in the graph theory. In Section \ref{sec-3}, we focus on the proofs of Theorems \ref{chordal-nice} and \ref{main3}. In Section \ref{sec-4}, we prove Theorems \ref{main2} and \ref{main1}.

\section{Preliminaries}\label{sec-2}

Throughout this paper, we fix $ V = \mathbb{K}^n $ as a vector space over some field $ \mathbb{K} $.
A {\it hyperplane arrangement} $ \mathcal{A} $ is a finite family of hyperplanes in $ V $.
We refer to \cite{POHT1992, PRS2007} for standard background and terminology on hyperplane arrangements.
We assume throughout this paper that $ \mathcal{A} $ is central, that is, $ \{0\} \in \bigcap_{H \in \mathcal{A}} H $.
The \textit{intersection poset} $L(\mathcal{A})$ is the set of all intersections of hyperplanes in $\mathcal{A}$, partially ordered by reverse inclusion. The minimal element is $V$, and the maximal element $\bigcap_{H \in \mathcal{A}} H$ is denoted by $T$.
It is known that $L(\A)$ is a geometric lattice with the rank function
$r(X) = n - \dim(X)$ for $X \in L(\A)$. We set $r := r(T)$ denote the rank of $L(\A)$.
The {\it characteristic polynomial} $\chi(\mathcal{A}, t)$ of $\A$ is
\[
\chi(\mathcal{A}, t) = \sum_{X \in {L}(\mathcal{A})} \mu(X) t^{\dim(X)},
\]
where $\mu$ is known as the {\it M\"{o}bius function}, defined recursively by $\mu(V) = 1$ and
$\mu(X) = - \sum_{Y < X}\mu(Y)$ for all $X \neq V$.
Clearly, $\chi(\A, 1) = 0$.
Recall that for any $X, Y \in L(\A)$, their {\it join} is  $X \vee Y = X \cap Y$, and their {\it meet} is
$X \wedge Y = \bigcap_{ Z \in L(\A);  X,Y  \subseteq Z}Z$.
Let $\mathcal{A}_1$ and $\mathcal{A}_2$ be hyperplane arrangements defined in vector spaces $V_1$ and $V_2$, respectively.
The {\it product arrangement} $\mathcal{A}_1 \times \mathcal{A}_2$ in $V_1 \oplus V_2$ is defined by
\[
\mathcal{A}_1 \times \mathcal{A}_2 = \{H_1 \oplus V_2 \mid H_1 \in \mathcal{A}_1\} \cup \{V_1 \oplus H_2 \mid H_2 \in \mathcal{A}_2\}.
\]
With the partial order defined by $(X_1, X_2) \leq (Y_1, Y_2)$ if $X_1 \leq Y_1$ and $X_2 \leq Y_2$, the poset $L(\mathcal{A}_1) \times L(\mathcal{A}_2)$ forms a lattice.
There is a natural isomorphism of lattices
\begin{align}\label{iso}
	\sigma \colon L(\mathcal{A}_1) \times L(\mathcal{A}_2) \longrightarrow L(\mathcal{A}_1 \times \mathcal{A}_2)
\end{align}
given by the map $\sigma(X_1, X_2) = X_1 \oplus X_2$; see \cite[Proposition 2.14]{POHT1992}.

We call an element $X \in L(\A)$ {\it modular} if for any $Y \in L(\A)$, the rank function satisfies
	$
	r(X)+r(Y) = r(X \vee Y) + r(X \wedge Y),
	$
	or equivalently, $X + Y \in L(\A)$ holds.
A maximal chain
$
V = X_0 < X_1 < \dots < X_{r} = T
$
of $L(\mathcal{A})$ consisting of modular elements $X_i$ is called a {\it maximal modular chain}.
The hyperplane arrangement $\mathcal{A}$, or its intersection lattice $L(\mathcal{A})$, is called {\it supersolvable} if there exists a maximal modular chain in $L(\mathcal{A})$.

Let $\pi = \{\pi_1, \ldots, \pi_{\ell}\}$ be a set partition of $\mathcal{A}$. That is, $\pi_1 \cup \cdots \cup \pi_{\ell} = \mathcal{A}$ and $\pi_i \cap \pi_j = \emptyset$ for $i \neq j$.
Each $\pi_i$ is called a part of $\pi$.
A $p$-tuple $S = (H_1, \ldots, H_p)$ of $\mathcal{A}$ is called a \textit{$p$-section} of $\pi$ if any two distinct hyperplanes in $S$ belong to different parts of $\pi$.
Additionally, we define the $0$-section to be $ S = () $.
The set of all $p$-sections of $\pi$ for all $p \geq 0$ is denoted by $\mathcal{S}(\pi)$.
We call the $p$-tuple $S = (H_1, \ldots, H_p)$ of $\mathcal{A}$ {\it independent} if $r\left(\bigcap_{i=1}^p H_i\right) = p $ and call the partition $\pi$ of $\mathcal{A}$ {\it independent} if all $S \in \mathcal{S}(\pi)$ are independent.
The partition $ \pi $ of $ \mathcal{A} $ is {\it nice} if $ \pi $ is independent and, for any $ X \in L(\mathcal{A})\setminus\{V\} $, there exists some $i \in \{1, \ldots, \ell\}$ such that $ |\pi_i \cap \mathcal{A}_X| = 1 $, where $ \mathcal{A}_X = \{ H \in \mathcal{A} \mid X \subseteq H \} $ denotes the {\it localization} of $ \mathcal{A} $ at $ X $.
Assume that $ \pi = \{\pi_1, \ldots, \pi_{\ell}\} $ is a nice partition of $ \mathcal{A}$.
For each $ X \in L(\mathcal{A}) $, let
$
\pi_X = \{\pi_i \cap \mathcal{A}_X \mid \pi_i \cap \mathcal{A}_X \neq \emptyset, \, i = 1, \ldots, \ell\}.
$
Then $\pi_X$ is a nice partition of  $ \mathcal{A}_X $; see \cite[Corollary 2.11]{HT1992}  for more details.
We say that $\A$ is \emph{factored} provided $\A$ admits a nice partition. Note that the original definition of a nice partition in \cite{HT1992} is an ordered set partition of $\mathcal{A}$.
We treat it as an unordered partition for simplicity, without affecting the essence.

Next, we present the definition of a chordal graph. Let $ G = ([n], E) $ be a simple graph,
where $[n] = \{1, \ldots, n\}$. Each edge $\{i, j\} \in E$ is denoted by $ij \in E$ for simplicity.
A vertex $u$ in a graph $G$ is  \textit{simplicial} if the subgraph induced by its neighborhood forms a clique, that is, every pair of distinct vertices in the neighborhood of $u$ is connected by an edge.
A {\it simplicial elimination ordering} is an ordering $\sigma_1 \prec \sigma_2\prec \cdots\prec \sigma_n$ of the vertices $[n]$ such that each vertex $\sigma_i$ is simplicial in the subgraph induced by the vertices $\{\sigma_i, \sigma_{i+1}, \ldots, \sigma_n\}$. Recall that a {\it chordal graph} is a graph in which every cycle of four or more vertices has a chord, which is an edge connecting two non-adjacent vertices in the cycle.
The following is a fundamental fact about chordal graphs; see \cite[Theorem~5.3.17]{DB2002} for details.

\begin{prop}\label{graphthy4.1}
	A graph $G$ is chordal if and only if $G$ has a simplicial elimination ordering.
\end{prop}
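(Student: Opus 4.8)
The plan is to prove the two implications separately, with the substantive content concentrated in the backward direction, which rests on the classical fact that a minimal separator in a chordal graph is a clique.

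For the forward direction, suppose $G$ admits a simplicial elimination ordering $\sigma_1 \prec \cdots \prec \sigma_n$, and let $C$ be any cycle of length at least four. I would consider the vertex of $C$ having the smallest index in the ordering, say $\sigma_i$, and let $u$ and $w$ be its two neighbors along $C$. Since $\sigma_i$ has the smallest index among the vertices of $C$, both $u$ and $w$ lie in the subgraph induced on $\{\sigma_i, \sigma_{i+1}, \ldots, \sigma_n\}$, in which $\sigma_i$ is simplicial by hypothesis; as $u$ and $w$ are both neighbors of $\sigma_i$, they must therefore be adjacent. Because $C$ has length at least four, $u$ and $w$ are non-consecutive on $C$, so the edge $uw$ is a chord of $C$. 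Hence every cycle of four or more vertices has a chord, and $G$ is chordal.

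For the backward direction, the key step is \emph{Dirac's lemma}: every chordal graph that is not complete has two non-adjacent simplicial vertices (while in a complete graph every vertex is simplicial). I would prove this by induction on the number of vertices. Given a non-complete chordal graph $G$, I pick two non-adjacent vertices $a$ and $b$ together with a minimal $a$--$b$ separator $S$, and let $A$ and $B$ denote the connected components of $G \setminus S$ containing $a$ and $b$ respectively. The crucial sub-claim is that $S$ is a clique: by minimality of $S$, every vertex of $S$ has a neighbor in both $A$ and $B$, so if $s, t \in S$ were non-adjacent, then a shortest $s$--$t$ path with interior in $A$ together with a shortest $s$--$t$ path with interior in $B$ would form a cycle of length at least four; this cycle is chordless, since shortest paths are induced and there are no edges between $A$ and $B$, contradicting chordality. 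With $S$ a clique established, I apply the inductive hypothesis to the strictly smaller chordal graph induced on $A \cup S$ (strictly smaller because $b \notin A \cup S$): it possesses a simplicial vertex, and since $S$ is a clique it cannot contain two non-adjacent simplicial vertices, so at least one simplicial vertex $v$ of $G[A \cup S]$ lies in $A$. As the $G$-neighborhood of any vertex of $A$ is contained in $A \cup S$, the vertex $v$ remains simplicial in $G$. Symmetrically one obtains a simplicial vertex $w \in B$, and $v$ and $w$ are non-adjacent because they lie in distinct components of $G \setminus S$.

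Finally, I would assemble the elimination ordering by iteration. Since every induced subgraph of a chordal graph is again chordal, Dirac's lemma furnishes a simplicial vertex of $G$; setting it as $\sigma_1$, deleting it, and recursing on $G \setminus \{\sigma_1\}$ yields an ordering $\sigma_1 \prec \cdots \prec \sigma_n$ in which each $\sigma_i$ is simplicial in the subgraph induced on $\{\sigma_i, \ldots, \sigma_n\}$, as required. I expect the main obstacle to be the clique property of the minimal separator $S$, which is precisely the point where chordality enters in an essential way; once that is secured, the surrounding inductions are routine.
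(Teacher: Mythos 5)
Your proposal is correct, and it coincides with the proof behind the paper's own treatment: the paper does not prove this proposition but cites it as the classical result \cite[Theorem~5.3.17]{DB2002}, whose standard proof is precisely your route (the lowest-indexed cycle vertex yields a chord for one direction; for the other, minimality forces the separator $S$ to be a clique, giving Dirac's lemma on two non-adjacent simplicial vertices, and iterated deletion of simplicial vertices assembles the ordering). No gaps to report.
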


A graph is {\it doubly connected} if it is connected and remains connected after the removal of any single vertex.
A {\it block} of a connected graph is a maximal subgraph that is doubly connected.
By definition, two distinct blocks intersect in at most one vertex.
Hence each connected graph is formed by its blocks linked together through some shared vertices. Let $G$ be a simple graph.
The hyperplane arrangement
$ \mathcal{A}_G = \{
H_{ij}\colon x_i - x_j = 0\mid  ij \in E\}
$ in $V$
is called the {\it graphical arrangement induced by $ G $}. If $G_1, G_2, \ldots, G_k$ are all the blocks of $G$, then
\begin{align}\label{product-decomposition}
	L(\mathcal{A}_G) \cong L(\mathcal{A}_{G_1}) \times L(\mathcal{A}_{G_2})  \times  \cdots \times L(\mathcal{A}_{G_k}).
\end{align}
The subsequent result is based on Stanley's characterization of supersolvable graphical arrangements, as presented in \cite[Corollary 4.10]{PRS2007}.

\begin{prop}\label{stanley4.10}
	Let $G$ be a simple graph. The arrangement $\A_G$ is supersolvable if and only if $G$ has a simplicial elimination ordering.
\end{prop}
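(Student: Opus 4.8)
The plan is to obtain the statement directly from two results already at hand: Stanley's characterization of supersolvable graphical arrangements and the purely graph-theoretic Proposition~\ref{graphthy4.1}. Stanley's Corollary~4.10 in \cite{PRS2007} states that $\A_G$ is supersolvable if and only if $G$ is chordal, while Proposition~\ref{graphthy4.1} states that $G$ is chordal if and only if $G$ admits a simplicial elimination ordering. Chaining these two equivalences yields at once that $\A_G$ is supersolvable if and only if $G$ has a simplicial elimination ordering, which is the assertion.

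To make the correspondence explicit (and because this dictionary is what underlies the cited result), I would first reduce to the connected case. Using the block decomposition~\eqref{product-decomposition} together with the lattice isomorphism~\eqref{iso}, supersolvability of $\A_G$ is equivalent to supersolvability of each $\A_{G_i}$, so we may assume $G$ is connected. Given a simplicial elimination ordering, I would reverse it to an addition order $v_1, \ldots, v_n$ in which the neighbors of each newly added $v_k$ among $\{v_1, \ldots, v_{k-1}\}$ form a clique (and are nonempty, by connectedness). Letting $X_k \in L(\A_G)$ be the flat that collapses $\{v_1, \ldots, v_{k+1}\}$ into a single block while leaving every other vertex a singleton, one gets a maximal chain $V = X_0 < X_1 < \cdots < X_{r} = T$ with $r(X_k) = k$.

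The key step --- and the only place where chordality is genuinely used --- is to check that each $X_k$ is modular, that is, $r(X_k) + r(Y) = r(X_k \vee Y) + r(X_k \wedge Y)$ for every $Y \in L(\A_G)$. This is precisely what the simpliciality (clique) condition delivers: since the relevant neighbors are already pairwise adjacent, collapsing them creates no extra rank drop, so the semimodular inequality saturates to equality. Conversely, a maximal modular chain forces each of its coatoms to be modular, and decoding what modularity means for a graphical flat produces a simplicial vertex and, inductively, a full simplicial elimination ordering. I expect this modularity verification to be the main obstacle; it is exactly what the combination of Stanley's result and Proposition~\ref{graphthy4.1} lets us bypass, so in the final write-up I would simply invoke those two equivalences and regard the explicit chain as the mechanism behind them.
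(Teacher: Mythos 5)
Your proposal is correct and matches the paper's treatment: the paper states this proposition without proof, presenting it as a direct consequence of Stanley's characterization of supersolvable graphical arrangements \cite[Corollary 4.10]{PRS2007} combined with the chordal/simplicial-elimination-ordering equivalence of Proposition~\ref{graphthy4.1}, which is exactly the chaining in your first paragraph. The explicit modular-chain construction you sketch is supplementary and not carried out in the paper, so simply invoking the two cited equivalences, as you propose for the final write-up, is precisely the paper's route.
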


\section{Nice partitions in graphical arrangements}\label{sec-3}

\subsection{Proof of Theorem \ref{chordal-nice}}	
We shall reduce the proof of Theorem \ref{chordal-nice} to the doubly connected components of $ G $.
The following lemma shows that the product-decomposition process \eqref{product-decomposition} preserves the nice partition of $\mathcal{A}$.
One can refer to \cite[Proposition 3.29]{HR2016} for this result in a different context.
\begin{lemma}\label{lem:1}
Let $G$ be a simple connected graph with blocks $G_1, G_2, \ldots, G_k$.
Then $\mathcal{A}_G$ is factored if and only if $\mathcal{A}_{G_i}$ is factored for all $i \in \{1, \ldots, k\}$.
\end{lemma}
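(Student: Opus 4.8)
The plan is to pass everything through the product decomposition \eqref{product-decomposition} and reduce to the corresponding statement for a product arrangement $\mathcal{A}_1 \times \cdots \times \mathcal{A}_k$. Since whether an arrangement is factored is defined purely through ranks of intersections and through localizations $\mathcal{A}_X$, it is an invariant of the intersection lattice together with its atoms; via the isomorphism $\sigma$ of \eqref{iso} we may therefore replace $\mathcal{A}_G$ by $\mathcal{A}_1 \times \cdots \times \mathcal{A}_k$ (identifying $\mathcal{A}_{G_i}$ with $\mathcal{A}_i$), whose hyperplane set is the disjoint union $\mathcal{A}_1 \sqcup \cdots \sqcup \mathcal{A}_k$. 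The argument rests on two structural facts. First, writing $X = X_1 \oplus \cdots \oplus X_k$, the localization splits as $\mathcal{A}_X = (\mathcal{A}_1)_{X_1} \sqcup \cdots \sqcup (\mathcal{A}_k)_{X_k}$, because a hyperplane coming from the $i$-th factor contains $X$ precisely when $X_i$ is contained in its $i$-th component. Second, for any tuple $S$ of hyperplanes, splitting it by factor as $S = S_1 \cup \cdots \cup S_k$ gives $\bigcap S = \bigoplus_{i} \bigcap S_i$, so that $r(\bigcap S) = \sum_{i} r(\bigcap S_i)$ by additivity of rank on the product lattice.

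For the backward direction (each $\mathcal{A}_{G_i}$ factored implies $\mathcal{A}_G$ factored), I would take nice partitions $\pi^{(i)}$ of each $\mathcal{A}_i$ and form their concatenation $\pi$, whose parts are exactly the parts of all the $\pi^{(i)}$. Each part lies in a single factor, so any section $S$ of $\pi$ restricts to a section $S_i$ of $\pi^{(i)}$ in each factor; independence of $\pi^{(i)}$ gives $r(\bigcap S_i) = |S_i|$, and summing through the second structural fact yields $r(\bigcap S) = |S|$, so $\pi$ is independent. For the second defining condition, any $X \neq V$ has $X_i \neq V_i$ for some $i$, and niceness of $\pi^{(i)}$ supplies a part meeting $(\mathcal{A}_i)_{X_i}$ in exactly one hyperplane; by the localization splitting this very part meets $\mathcal{A}_X$ in exactly one hyperplane. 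Hence $\pi$ is nice.

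For the forward direction I would invoke the localization result quoted in Section~\ref{sec-2} (that $\pi_X$ is a nice partition of $\mathcal{A}_X$ whenever $\pi$ is nice). Given a nice partition $\pi$ of $\mathcal{A}_1 \times \cdots \times \mathcal{A}_k$, fix $i$ and set $X = V_1 \oplus \cdots \oplus T_i \oplus \cdots \oplus V_k$, placing the center $T_i$ of $L(\mathcal{A}_i)$ in the $i$-th slot and the bottom element in every other slot. Since $T_i \subseteq H$ for every $H \in \mathcal{A}_i$ while $V_j \nsubseteq H$ for every hyperplane of $\mathcal{A}_j$ with $j \neq i$, the localization splitting gives $\mathcal{A}_X = \mathcal{A}_i$. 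Therefore $\pi_X$ is a nice partition of $\mathcal{A}_i$, i.e. $\mathcal{A}_{G_i}$ is factored.

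The only genuinely technical part is the backward implication, where both defining conditions of a nice partition must be checked for the concatenated partition; however, each reduces cleanly to the two structural facts, so I expect nothing harder than careful bookkeeping of which factor each hyperplane and each coordinate of $X$ lives in. The forward implication is essentially immediate once the correct localizing element $X$ is identified, and the reduction to a product arrangement is handled entirely by \eqref{product-decomposition} and \eqref{iso}.
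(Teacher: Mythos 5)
Your proposal is correct and takes essentially the same route as the paper: the forward direction localizes at $X_i = \bigcap_{H \in \mathcal{A}_{G_i}} H$ (which is exactly your $V_1 \oplus \cdots \oplus T_i \oplus \cdots \oplus V_k$ under the isomorphism \eqref{iso}) and invokes the quoted localization result, while the backward direction forms the union of the blockwise nice partitions. The only difference is one of detail: you route everything explicitly through the product decomposition \eqref{product-decomposition} and verify the independence and singleton-part conditions that the paper dismisses as ``readily verified.''
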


\begin{proof}
For each $1 \le i \le k$,
set
\[X_i = \bigcap_{H \in \mathcal{A}_{G_i}}H \in L(\mathcal{A}_G).\]
It is clear that $\mathcal{A}_{X_i} = \mathcal{A}_{G_i}$.
As a result,
$\pi_{X_i}$ is a nice partition of $\mathcal{A}_{G_i}$ for any $i \in \{1, \ldots, k\}$.
Conversely, let $\pi^i$ be a nice partition of $\mathcal{A}_{G_i}$ for $i=1, \ldots, k$.
It can be readily verified that $\pi = \pi^1 \cup \cdots \cup \pi^k$ forms a nice partition of $\mathcal{A}_G$.
\end{proof}

To illustrate the lemma more clearly, we provide a concrete example.

\begin{exam}\label{ex}
Let $G$ be as shown in the Figure \ref{fig:example}, with $G_1$ and $G_2$ being the blocks corresponding to the vertex sets $\{1, 2, 3, 4\}$ and $\{4, 5, 6\}$, respectively.
It is easy to verify that $\pi^1 = \{\{H_{24}\}, \{H_{12}, H_{14}\}, \{H_{23}, H_{34}\}\}$ and $\pi^2 = \{\{H_{56}\}, \{H_{45}, H_{46}\}\}$ are nice partitions of $\A_{G_1}$ and $\A_{G_2}$, respectively.
Then, the union $\pi^1 \cup \pi^2$ forms a nice partition of $\A_G$.
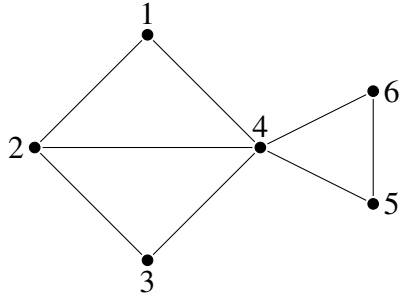
\begin{figure}[h!]
    \centering
    \begin{tikzpicture}[scale=1.5, every node/.style={inner sep=0, outer sep=1, minimum size=4pt}]
        \node[draw, fill=black, circle, label=above:{1}] (1) at (0, 2) {};
        \node[draw, fill=black, circle, label=left:{2}] (2) at (-1, 1) {};
        \node[draw, fill=black, circle, label=below:{3}] (3) at (0, 0) {};
        \node[draw, fill=black, circle, label=above:{4}] (4) at (1, 1) {};
        \node[draw, fill=black, circle, label=right:{5}] (5) at (2, 0.5) {};
        \node[draw, fill=black, circle, label=right:{6}] (6) at (2, 1.5) {};

        \draw (1) -- (2);
        \draw (1) -- (4);
        \draw (2) -- (3);
        \draw (3) -- (4);
        \draw (2) -- (4);
        \draw (4) -- (5);
        \draw (4) -- (6);
        \draw (5) -- (6);
    \end{tikzpicture}
    \caption{The graph $G$ in Example \ref{ex}, which consists of two blocks}
    \label{fig:example}
\end{figure}
\end{exam}

\begin{proof}[Proof of Theorem \ref{chordal-nice}]
Combining Propositions \ref{graphthy4.1} and \ref{stanley4.10},
we see that a graphical arrangement is supersolvable if and only if the corresponding graph is chordal.
Further,
by applying the well-known result \cite[Proposition 2.67, p.~50]{POHT1992}, which shows that every supersolvable arrangement admits a nice partition,
we conclude that the arrangement induced by a chordal graph is factored, establishing the sufficiency.

Therefore, it remains to show that the graph $G$ must be chordal whenever $\A_G$ is factored.
According to Lemma \ref{lem:1}, we can assume that $G$ is doubly connected.
Let $\pi = \{\pi_1, \ldots, \pi_{\ell}\}$ be a nice partition of $\A_G$.
The case when $G$ is acyclic is trivial.
Suppose that $G$ contains a chordless cycle $C = (e_1, \ldots, e_k)$ with $k \geq 4$, where each $e_i$ is an edge of $G$. For $e_i, e_j \in C$, set $X = H_{e_i} \cap H_{e_j}$.
Then we have $(\A_G)_X = \{H_{e_i}, H_{e_j}\}$ due to the assumption that $C$ has no chord.
Since $\pi$ is a nice partition of $\A_G$,
we have $|(\A_G)_X \cap \pi_t| = 1$ for some $t \in \{1, \ldots, \ell \}$.
Therefore, $H_{e_i}$ and $H_{e_j}$ must be in different parts of $\pi$.
This means that $H_{e_1}, H_{e_2}, \ldots, H_{e_k}$ all lie in different parts of $\pi$.
Hence, $S = (H_{e_1}, \ldots, H_{e_k})$ forms a $k$-section of $\pi$.
Since $\pi$ is independent, we know $S$ is independent, which contradicts the assumption that $C$ is a cycle.
Therefore, $G$ is a chordal graph.
\end{proof}

\subsection{Proof of Theorem \ref{main3}}	
Our next task is to prove Theorem \ref{main3},
which asserts that every nice partition of a factored graphical arrangement $\A$ is induced by a maximal modular chain of $L(\A)$. In other words, we can construct all the nice partitions of $\A$ through maximal chains of $L(\A)$.
To this end, we will lay the groundwork with some lemmas. Let $\pi$ be a nice partition of $\A$. It follows from \cite[Corollary 3.90, p.~85]{POHT1992} that $r(X) = r(L(\A_X)) = |\pi_X|$ for any $X \in L(\A)$. For instance, when $ G $ is connected, the rank of $ L(\mathcal{A}_G) $ is $n - 1$. Hence, the number of parts in its nice partition equals $ n - 1 $, provided that $ \mathcal{A}_G $ is factored.
	
\begin{lemma}\label{lem1}
Let $ \pi $ be a nice partition of $ \A_G $. Then for any triangle $ T $ in $ G $, let $ X = \bigcap_{H \in \A_T} H $. The partition $ \pi_X $ consists of two parts, one of size $1$ and the other of size $2$.
	\end{lemma}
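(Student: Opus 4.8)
The plan is to pin down the localization $\A_X$ explicitly and then invoke the counting identity $r(X) = |\pi_X|$ recorded just before the lemma. Write the triangle as $T$ on vertices $a, b, c$, so that $\A_T = \{H_{ab}, H_{bc}, H_{ca}\}$ and $X = H_{ab} \cap H_{bc} \cap H_{ca}$ is the subspace where $x_a = x_b = x_c$. First I would check that $\A_X = \A_T$: a hyperplane $H_{ij}$ contains $X$ precisely when $x_i = x_j$ holds on all of $X$, which forces $\{i,j\} \subseteq \{a,b,c\}$; since $T$ is a triangle, the only edges of $G$ among $a, b, c$ are $ab, bc, ca$, so no further hyperplane localizes at $X$. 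In particular $|\A_X| = 3$.

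Next I would compute $r(X) = 2$. The three defining forms $x_a - x_b$, $x_b - x_c$, $x_c - x_a$ satisfy the single linear relation $(x_a - x_b) + (x_b - x_c) + (x_c - x_a) = 0$ and are otherwise independent, so they span a $2$-dimensional space; hence $\codim X = r(X) = 2$. Because $\pi$ is nice, $\pi_X$ is a nice partition of $\A_X$ (as recalled in Section~\ref{sec-2}), and the identity $r(X) = |\pi_X|$ then gives $|\pi_X| = 2$. Thus $\pi_X$ partitions the three hyperplanes of $\A_X$ into exactly two nonempty blocks, and the only way to split a $3$-element set into two nonempty parts is into parts of sizes $1$ and $2$. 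This is exactly the claim.

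There is essentially no hard step here; the only points requiring care are the two bookkeeping facts $\A_X = \A_T$ and $r(X) = 2$, both immediate from the triangle structure. If one prefers to avoid the cited counting identity, the same conclusion follows directly from the defining properties of a nice partition: independence rules out all three hyperplanes lying in distinct parts, since $(H_{ab}, H_{bc}, H_{ca})$ would then be an independent $3$-section contradicting $r(X) = 2$, while niceness supplies a part meeting $\A_X$ in exactly one hyperplane. Combining these forces the remaining two hyperplanes into a common part, again yielding the sizes $1$ and $2$.
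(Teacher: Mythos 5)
Your proposal is correct and follows essentially the same route as the paper: localize at $X$, use that $\pi_X$ is nice together with the identity $r(X) = |\pi_X|$ to get exactly two nonempty parts, and conclude the sizes must be $1$ and $2$ since $|\A_X| = 3$. The paper's proof is terser, leaving the bookkeeping facts $\A_X = \A_T$ and $r(X)=2$ implicit, which you verify explicitly.
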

	
	\begin{proof}
Note that the partition $\pi_X$ is nice since $\pi$ is a nice partition. Therefore, $ r(X) = 2 $ implies that $ \pi_X $ contains exactly two non-empty parts, as required.
	\end{proof}
	
	\begin{lemma}\label{lem2}
Let $ \pi $ be a nice partition of $ \A_G $.
If $ H_{ij} $ and $ H_{jk} $ are in the same part of $ \pi $,
then $ ik $ must be an edge of $ G $, and $ H_{ik} $ must belong to a distinct part of $ \pi $.
	\end{lemma}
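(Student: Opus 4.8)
The plan is to localize the arrangement at the flat $X = H_{ij} \cap H_{jk}$ and read off the structure of the induced partition $\pi_X$. Since $H_{ij}$ and $H_{jk}$ are distinct hyperplanes sharing the common vertex $j$, the vertices $i, j, k$ are pairwise distinct, and the linear forms $x_i - x_j$ and $x_j - x_k$ are linearly independent; hence $r(X) = 2$ and $X \neq V$. The first step is to identify $\A_X$ explicitly. A hyperplane $H_{pq}$ contains $X = \{x_i = x_j = x_k\}$ precisely when $\{p,q\} \subseteq \{i,j,k\}$, so $\A_X$ consists of those $H_{pq}$ with $pq \in E$ and $\{p,q\} \subseteq \{i,j,k\}$. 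In particular $\A_X \subseteq \{H_{ij}, H_{jk}, H_{ik}\}$, with $H_{ik} \in \A_X$ if and only if $ik \in E$.

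To prove that $ik \in E$, I would argue by contradiction. If $ik \notin E$, then $\A_X = \{H_{ij}, H_{jk}\}$. By hypothesis these two hyperplanes lie in a common part $\pi_s$, so $\pi_s \cap \A_X = \{H_{ij}, H_{jk}\}$ has two elements, while $\pi_t \cap \A_X = \emptyset$ for every $t \neq s$. Thus no part of $\pi$ meets $\A_X$ in exactly one hyperplane, which contradicts the defining property of a nice partition applied to $X \in L(\A_G) \setminus \{V\}$. Hence $ik \in E$ and $H_{ik} \in \A_X$.

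Once $ik \in E$ is established, $\A_X = \{H_{ij}, H_{jk}, H_{ik}\}$ is exactly the localization at the triangle $T = \{i,j,k\}$ of $G$. Applying Lemma \ref{lem1}, the partition $\pi_X$ has exactly two parts, one of size $1$ and one of size $2$. Since $H_{ij}$ and $H_{jk}$ belong to a common part, they cannot occupy the singleton part, so together they must constitute the part of size $2$; consequently $\{H_{ik}\}$ is the part of size $1$. Therefore $H_{ik}$ lies in a part of $\pi_X$, and hence of $\pi$, distinct from the common part of $H_{ij}$ and $H_{jk}$, which completes the argument.

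I expect the computation of the localization $\A_X$ to be the only place requiring genuine care, since everything else follows directly from the niceness condition and from Lemma \ref{lem1}. There is no substantial obstacle beyond correctly pinning down which hyperplanes contain the flat $X$ and observing that the two-part structure forced by the triangle leaves $H_{ik}$ no choice but to form its own part.
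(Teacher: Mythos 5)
Your proof is correct and takes essentially the same route as the paper: localize at $X = H_{ij} \cap H_{jk}$, derive a contradiction from the assumption $ik \notin E$, and then invoke Lemma \ref{lem1} to force $H_{ik}$ into its own part. The only cosmetic difference is that you get the contradiction directly from the singleton condition in the definition of a nice partition, while the paper instead uses the fact that $|\pi_X| = r(X) = 2$ would fail; both observations are immediate.
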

	\begin{proof}
Write $ X = H_{ij} \cap H_{jk} $.
Suppose $ ik \notin E(G) $, which implies that $ \A_X = \{H_{ij}, H_{jk}\} $.
Consequently, $ \pi_X $ has only one part with size $2$,
which does not equal the $ r(X) $, leading to a contradiction. Thus, $ ik \in E(G) $, and $ H_{ik} $ lies in another part of $ \pi $ by Lemma \ref{lem1}.
	\end{proof}

\begin{lemma}\label{mainprop1}
Let $ G $ be a doubly connected chordal graph.
Suppose that $ \pi = \{\pi_1, \ldots, \pi_{n-1}\} $ is a nice partition of $ \mathcal{A}_G $. Then
\begin{enumerate}
    \item[{\rm (1)}]
    For each $ i \in \{1, \ldots, n-1\} $, the edges corresponding to the hyperplanes in $ \pi_i $ are incident to a common vertex, denoted $ v_i $.
    \item[{\rm (2)}] Under the notation of {\rm (1)}, for $ i \neq j $, we have $ v_i \neq v_j $.
\end{enumerate}
\end{lemma}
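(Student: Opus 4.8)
The plan is to establish (1) and (2) in turn: first extract from Lemmas \ref{lem1} and \ref{lem2} the geometric meaning of a single part, and then use the double connectivity of $G$ together with chordality to force the centres apart.

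For (1), the first step is to show that any two hyperplanes in the same part correspond to edges sharing a vertex. Indeed, if $H_{ab},H_{cd}\in\pi_i$ with $\{a,b\}\cap\{c,d\}=\emptyset$, then $X=H_{ab}\cap H_{cd}$ has rank $2$ and, since $a,b,c,d$ are distinct, $(\A_G)_X=\{H_{ab},H_{cd}\}$; as both lie in $\pi_i$, no part meets $(\A_G)_X$ in exactly one hyperplane, contradicting niceness. Thus the edges of $\pi_i$ pairwise intersect, and an elementary Helly-type observation shows that a family of pairwise-intersecting edges is either a star (all incident to a common vertex) or a triangle. The triangle case is excluded by Lemma \ref{lem2}: if $\pi_i=\{H_{ab},H_{bc},H_{ca}\}$, then $H_{ab},H_{bc}$ lie in the same part, forcing $H_{ca}$ into a different part, a contradiction. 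Hence each $\pi_i$ is a star, and I take $v_i$ to be its centre (unique when $|\pi_i|\ge 2$).

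For (2), I would argue by contradiction, and it suffices to treat two parts $\pi_i,\pi_j$ of size $\ge 2$ with a common centre $v$ (the singleton parts, whose centres are determined only up to the two endpoints, can be absorbed afterwards by a short system-of-distinct-representatives argument using $r(X)=|\pi_X|$). Write $B_i=\{w:H_{vw}\in\pi_i\}$ and $B_j=\{w:H_{vw}\in\pi_j\}$. Lemma \ref{lem2} shows each of $B_i,B_j$ is a clique, and the next step is to show there is no edge between them: if $a\in B_i$, $c\in B_j$ and $ac\in E$, then localizing at the triangle $vac$ and invoking Lemma \ref{lem1} (equivalently the identity $r(X)=|\pi_X|$) forces $H_{ac}$ into the same part as $H_{va}$ or $H_{vc}$, which is impossible since $H_{ac}$ is not incident to $v$ while $\pi_i,\pi_j$ are stars centred at $v$. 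Hence $B_i$ and $B_j$ are non-adjacent cliques inside $N(v)$. To finish, I would exploit that $G$ is doubly connected, so $G-v$ is connected and there is a path between $B_i$ and $B_j$ avoiding $v$. Choosing $a\in B_i$, $c\in B_j$ and such a path $P=(a=p_0,\dots,p_m=c)$ of minimum length over all pairs of distinct parts centred at $v$, the path $P$ is induced and $m\ge 2$; then the cycle $v,p_0,\dots,p_m,v$ has length $\ge 4$, so chordality yields a chord, which must be $vp_t$ for some internal $t$. This produces a neighbour $p_t$ of $v$ strictly between $B_i$ and $B_j$, and if $H_{vp_t}$ belonged to a part centred at $v$ the corresponding sub-path of $P$ would be a shorter connection between two such cliques, contradicting minimality.

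The hard part will be the residual case where $p_t$ is a neighbour of $v$ whose edge $H_{vp_t}$ is centred at $p_t$ rather than at $v$: there Lemma \ref{lem2} manufactures yet another neighbour of $v$, and one must guarantee that this reduction terminates. I expect to control it either by a secondary induction on the length of $P$ (repeatedly pulling the chord $vp_t$ toward $p_1$) or, more cleanly, by passing to the localization $X$ with $x_v=x_w$ for all $w\in B_i\cup B_j$, whose support is the clique-sum of $\{v\}\cup B_i$ and $\{v\}\cup B_j$ along $v$, and then re-introducing the external connection forced by double connectivity to violate niceness. This termination and bookkeeping, rather than any single localization, is where the real work lies; once it is settled, distinctness of the forced centres follows, and the singleton parts are then assigned distinct centres by the Hall-type argument mentioned above, completing (2).
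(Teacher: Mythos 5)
Your part (1) is correct and essentially identical to the paper's argument (pairwise vertex-sharing via rank-$2$ localizations, the triangle case excluded by Lemma \ref{lem2}), and your setup for part (2) --- the cliques $B_i,B_j\subseteq N(v)$, their non-adjacency via the triangle localization, the shortest $B_i$--$B_j$ path in $G-v$, and a chord from $v$ forced by chordality --- also matches the paper's proof. But there is a genuine gap exactly where you admit ``the real work lies'': the case in which the chord edge $H_{vp_t}$ lies in a part centered at $p_t$ rather than at $v$ is never resolved; you only name two strategies you ``expect'' to work. This case is not a residual technicality, it is the core of the proof, since nothing prevents \emph{every} chord's edge from being centered on the path. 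The paper closes it by an induction along the path: first, iterating chordality on the sub-cycles (not extracting just one chord) shows that \emph{all} $vp_t$, $1\le t\le m-1$, are edges; then in the triangle $[v,p_0,p_1,v]$, since $vp_1$ and $p_0p_1$ cannot lie in the part of $vp_0$ (the first because $p_1\notin B_i$, the second because that part is a star at $v$), Lemma \ref{lem1} forces $vp_1$ and $p_0p_1$ into a common part, necessarily centered at $p_1$; then in $[v,p_1,p_2,v]$ the pair $\{vp_1,p_1p_2\}$ is excluded by Lemma \ref{lem2} (it would force the chord $p_0p_2$, contradicting that $P$ is induced) and the pair $\{vp_1,vp_2\}$ is excluded by part (1) (that part already contains $p_0p_1$), so $vp_2$ and $p_1p_2$ share a part centered at $p_2$; propagating to $t=m$ forces $p_{m-1}p_m\in\pi_j$, contradicting that $\pi_j$ is a star at $v$. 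That propagation is the missing argument.

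A second, smaller defect: your reduction to parts of size $\ge 2$, with singletons ``absorbed afterwards by a short system-of-distinct-representatives argument,'' is not harmless. Verifying Hall's condition (e.g.\ ruling out that both endpoints of a singleton edge are centers of other parts) requires the same localization machinery, so nothing is saved; worse, since your minimality is taken only over pairs of parts of size $\ge 2$ centered at $v$, a singleton chord-part $\{H_{vp_t}\}$ --- which is as legitimately ``centered at $v$'' as at $p_t$ --- escapes your minimality contradiction, so even the case you did handle has a hole. The paper sidesteps both issues by running the contradiction for an arbitrary pair of parts all of whose edges pass through $v$, never using $|\pi_i|\ge 2$; with that formulation, distinctness of centers for singleton parts (under either choice of endpoint) comes for free.
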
	

\begin{proof}
Due to the one-to-one correspondence between edges and hyperplanes,
for simplicity, edges and hyperplanes will not be explicitly distinguished unless necessary.

To prove (1), note the following simple facts for any graphical arrangement $\A_G$:
For any $X \in L(\A_G)$ with $\text{rank}(X)=2$, we have $|\A_X| \leq 3$, and besides,
\begin{enumerate}
    \item[(a)] If $|\A_X| = 3$, then the hyperplanes in $\A_X$ form a triangle in $G$.
    \item[(b)] If $|\A_X| = 2$, then either the hyperplanes in $\A_X$ do not share a common vertex in $G$, or there is no edge connecting the remaining two vertices.
\end{enumerate}
For any $H, K \in \pi_i$, since $\text{rank}(H \cap K) = 2$, $\pi_{H \cap K}$ consists of two parts.
As $H$ and $K$ already belong to the same part of $\pi_{H \cap K}$, the other part must be non-empty, implying $|\mathcal{A}_{H \cap K}| = 3$.
By (a) of the above facts,
$H$ and $K$ must form two sides of a triangle in $G$, sharing a common vertex.
In summary, any two hyperplanes in $\pi_i$ share a common vertex.
However, Lemma \ref{lem2} ensures that the hyperplanes in $\pi_i$ cannot form a triangle.
Thus, all hyperplanes in $\pi_i$ form a star-shaped graph, where every hyperplane is connected to a single common vertex $v_i$.

To prove (2), suppose for contradiction that $v_1 = v_2 = v$.
Let
\[
A = \{u \in [n] \mid {vu} \in \pi_1\} \quad \text{and} \quad B = \{w \in  [n] \mid {vw} \in \pi_2\}.
\]
Since $G$ is doubly connected,
we can choose a shortest path $P = [u_0,u_1,\ldots ,u_k]$ from $A$ to $B$ that does not pass through $v$.
By the minimality of the length of $P$,
we have $u_0 \in A$, $u_k \in B$, and $u_1, \ldots, u_{k-1} \in  [n] \setminus (A \cup B)$.
See Figure \ref{Fig:com} for an illustration.
If $ k = 1 $,
then $ {vu_0} $, $ {vu_1} $, and $ {u_0u_1} $ form a triangle.
By Lemma \ref{lem1}, $ {u_0u_1} $ must belong to either $ \pi_1 $ or $ \pi_2 $.
Regardless, it should be connected to vertex $ v $ by (1) of this Lemma, which leads to a contradiction.
For $ k \geq 2 $,
consider the cycle $ [v, u_0, u_1, \cdots u_k, v] $ of length $ k+2 $.
Given that $G$ is chordal and by the minimality of the path $P$, it follows that each $vu_i$ is an edge of $G$ for all $i \in \{1, \ldots, k-1\}$.
In the triangle $ [v,u_0,u_1,v] $, since $u_0u_1, vu_1 \not\in \pi_1 $, Lemma \ref{lem1} implies that $ u_0u_1 $ and $ vu_1 $ are in the same part of $\pi$.
On the one hand, $ u_0u_1 $ and $ u_1u_2 $ cannot lie in the same part of $\pi$; otherwise, by Lemma \ref{lem2}, $ u_0u_2 $ would be an edge in $ G $, contradicting the minimality of the path $P$.
On the other hand, by (1) of this Lemma, $u_0u_1$ and $vu_2$ cannot lie in the same part of $\pi$.
Hence, in the triangle $ [v,u_1,u_2,v] $, by Lemma \ref{lem1}, we conclude that $ u_1u_2 $ and $ vu_2 $ are in the same part of $\pi$. Proceeding in this manner to the end, we find that $u_{k-1}u_k$ and $vu_k$ are in the same part of $\pi$, which means $u_{k-1}u_k \in \pi_2 $, leading to a contradiction.
\tikzset{every picture/.style={line width=0.75pt}} 
\begin{figure}[H]
\centering
			\begin{tikzpicture}[x=0.75pt,y=0.75pt,yscale=-0.93,xscale=0.93]

				\draw    (290,59) -- (350,150) ;
				\draw [shift={(350,150)}, rotate = 56.7] [color={rgb, 255:red, 0; green, 0; blue, 0 }  ][fill={rgb, 255:red, 0; green, 0; blue, 0 }  ][line width=0.75]      (0, 0) circle [x radius= 1.34, y radius= 1.34]   ;
				\draw [shift={(290,59)}, rotate = 56.7] [color={rgb, 255:red, 0; green, 0; blue, 0 }  ][fill={rgb, 255:red, 0; green, 0; blue, 0 }  ][line width=0.75]      (0, 0) circle [x radius= 1.34, y radius= 1.34]   ;
				\draw    (410,59) -- (350,150) ;
				\draw [shift={(350,150)}, rotate = 123.01] [color={rgb, 255:red, 0; green, 0; blue, 0 }  ][fill={rgb, 255:red, 0; green, 0; blue, 0 }  ][line width=0.75]      (0, 0) circle [x radius= 1.34, y radius= 1.34]   ;
				\draw [shift={(410,59)}, rotate = 123.01] [color={rgb, 255:red, 0; green, 0; blue, 0 }  ][fill={rgb, 255:red, 0; green, 0; blue, 0 }  ][line width=0.75]      (0, 0) circle [x radius= 1.34, y radius= 1.34]   ;
				\draw    (350,150) -- (410,240) ;
				\draw [shift={(410,240)}, rotate = 56.6] [color={rgb, 255:red, 0; green, 0; blue, 0 }  ][fill={rgb, 255:red, 0; green, 0; blue, 0 }  ][line width=0.75]      (0, 0) circle [x radius= 1.34, y radius= 1.34]   ;
				\draw [shift={(350,150)}, rotate = 56.6] [color={rgb, 255:red, 0; green, 0; blue, 0 }  ][fill={rgb, 255:red, 0; green, 0; blue, 0 }  ][line width=0.75]      (0, 0) circle [x radius= 1.34, y radius= 1.34]   ;
				\draw    (350,150) -- (290,239) ;
				\draw [shift={(290,239)}, rotate = 124.67] [color={rgb, 255:red, 0; green, 0; blue, 0 }  ][fill={rgb, 255:red, 0; green, 0; blue, 0 }  ][line width=0.75]      (0, 0) circle [x radius= 1.34, y radius= 1.34]   ;
				\draw [shift={(350,150)}, rotate = 124.67] [color={rgb, 255:red, 0; green, 0; blue, 0 }  ][fill={rgb, 255:red, 0; green, 0; blue, 0 }  ][line width=0.75]      (0, 0) circle [x radius= 1.34, y radius= 1.34]   ;
				\draw   (290,244) .. controls (289.36,249) and (291.7,251.32) .. (296.37,251.3) -- (340.17,251.08) .. controls (346.84,251.05) and (350.18,253.37) .. (350.2,258.04) .. controls (350.18,253.37) and (353.5,251.02) .. (360.17,250.99)(357.17,251) -- (403.97,250.77) .. controls (408.64,250.75) and (410.96,248.41) .. (410.93,243.74) ;
				\draw   (408.27,53.07) .. controls (408.24,48.4) and (405.9,46.08) .. (401.23,46.11) -- (358.23,46.35) .. controls (351.56,46.38) and (348.22,44.07) .. (348.19,39.4) .. controls (348.22,44.07) and (344.9,46.42) .. (338.23,46.46)(341.23,46.44) -- (295.23,46.7) .. controls (290.56,46.73) and (288.24,49.07) .. (288.27,53.74) ;
				\draw    (350.93,150.4) -- (452.93,70.4) ;
				\draw    (410.27,59.07) -- (452.93,70.4) ;
				\draw [shift={(452.93,70.4)}, rotate = 14.88] [color={rgb, 255:red, 0; green, 0; blue, 0 }  ][fill={rgb, 255:red, 0; green, 0; blue, 0 }  ][line width=0.75]      (0, 0) circle [x radius= 1.34, y radius= 1.34]   ;
				\draw [shift={(410.27,59.07)}, rotate = 14.88] [color={rgb, 255:red, 0; green, 0; blue, 0 }  ][fill={rgb, 255:red, 0; green, 0; blue, 0 }  ][line width=0.75]      (0, 0) circle [x radius= 1.34, y radius= 1.34]   ;
				\draw    (350.93,150.4) -- (471.6,99.73) ;
				\draw    (452.93,70.4) -- (471.6,99.73) ;
				\draw [shift={(471.6,99.73)}, rotate = 57.53] [color={rgb, 255:red, 0; green, 0; blue, 0 }  ][fill={rgb, 255:red, 0; green, 0; blue, 0 }  ][line width=0.75]      (0, 0) circle [x radius= 1.34, y radius= 1.34]   ;
				\draw [shift={(452.93,70.4)}, rotate = 57.53] [color={rgb, 255:red, 0; green, 0; blue, 0 }  ][fill={rgb, 255:red, 0; green, 0; blue, 0 }  ][line width=0.75]      (0, 0) circle [x radius= 1.34, y radius= 1.34]   ;
				\draw  [dash pattern={on 0.84pt off 2.51pt}]  (471.6,99.73) .. controls (488.93,131.07) and (488.93,189.07) .. (470.93,219.73) ;
				\draw [shift={(470.93,219.73)}, rotate = 120.41] [color={rgb, 255:red, 0; green, 0; blue, 0 }  ][fill={rgb, 255:red, 0; green, 0; blue, 0 }  ][line width=0.75]      (0, 0) circle [x radius= 1.34, y radius= 1.34]   ;
				\draw [shift={(471.6,99.73)}, rotate = 61.05] [color={rgb, 255:red, 0; green, 0; blue, 0 }  ][fill={rgb, 255:red, 0; green, 0; blue, 0 }  ][line width=0.75]      (0, 0) circle [x radius= 1.34, y radius= 1.34]   ;
				\draw    (350.93,150.4) -- (470.93,219.73) ;
				\draw [shift={(470.93,219.73)}, rotate = 30.02] [color={rgb, 255:red, 0; green, 0; blue, 0 }  ][fill={rgb, 255:red, 0; green, 0; blue, 0 }  ][line width=0.75]      (0, 0) circle [x radius= 1.34, y radius= 1.34]   ;
				\draw [shift={(350.93,150.4)}, rotate = 30.02] [color={rgb, 255:red, 0; green, 0; blue, 0 }  ][fill={rgb, 255:red, 0; green, 0; blue, 0 }  ][line width=0.75]      (0, 0) circle [x radius= 1.34, y radius= 1.34]   ;
				\draw    (470.93,219.73) -- (450.93,249.73) ;
				\draw [shift={(450.93,249.73)}, rotate = 123.69] [color={rgb, 255:red, 0; green, 0; blue, 0 }  ][fill={rgb, 255:red, 0; green, 0; blue, 0 }  ][line width=0.75]      (0, 0) circle [x radius= 1.34, y radius= 1.34]   ;
				\draw [shift={(470.93,219.73)}, rotate = 123.69] [color={rgb, 255:red, 0; green, 0; blue, 0 }  ][fill={rgb, 255:red, 0; green, 0; blue, 0 }  ][line width=0.75]      (0, 0) circle [x radius= 1.34, y radius= 1.34]   ;
				\draw    (410.27,240.4) -- (450.93,249.73) ;
				\draw [shift={(450.93,249.73)}, rotate = 12.93] [color={rgb, 255:red, 0; green, 0; blue, 0 }  ][fill={rgb, 255:red, 0; green, 0; blue, 0 }  ][line width=0.75]      (0, 0) circle [x radius= 1.34, y radius= 1.34]   ;
				\draw [shift={(410.27,240.4)}, rotate = 12.93] [color={rgb, 255:red, 0; green, 0; blue, 0 }  ][fill={rgb, 255:red, 0; green, 0; blue, 0 }  ][line width=0.75]      (0, 0) circle [x radius= 1.34, y radius= 1.34]   ;
				\draw  [dash pattern={on 0.84pt off 2.51pt}]  (350.93,150.4) -- (482.93,131.73) ;
				\draw [shift={(482.93,131.73)}, rotate = 351.95] [color={rgb, 255:red, 0; green, 0; blue, 0 }  ][fill={rgb, 255:red, 0; green, 0; blue, 0 }  ][line width=0.75]      (0, 0) circle [x radius= 1.34, y radius= 1.34]   ;
				\draw [shift={(350.93,150.4)}, rotate = 351.95] [color={rgb, 255:red, 0; green, 0; blue, 0 }  ][fill={rgb, 255:red, 0; green, 0; blue, 0 }  ][line width=0.75]      (0, 0) circle [x radius= 1.34, y radius= 1.34]   ;
				\draw  [dash pattern={on 0.84pt off 2.51pt}]  (350.93,150.4) -- (483.6,174.4) ;
				\draw [shift={(483.6,174.4)}, rotate = 10.25] [color={rgb, 255:red, 0; green, 0; blue, 0 }  ][fill={rgb, 255:red, 0; green, 0; blue, 0 }  ][line width=0.75]      (0, 0) circle [x radius= 1.34, y radius= 1.34]   ;
				\draw [shift={(350.93,150.4)}, rotate = 10.25] [color={rgb, 255:red, 0; green, 0; blue, 0 }  ][fill={rgb, 255:red, 0; green, 0; blue, 0 }  ][line width=0.75]      (0, 0) circle [x radius= 1.34, y radius= 1.34]   ;
				\draw    (350.93,150.4) -- (450.93,249.73) ;
				\draw [shift={(450.93,249.73)}, rotate = 44.81] [color={rgb, 255:red, 0; green, 0; blue, 0 }  ][fill={rgb, 255:red, 0; green, 0; blue, 0 }  ][line width=0.75]      (0, 0) circle [x radius= 1.34, y radius= 1.34]   ;
				\draw [shift={(350.93,150.4)}, rotate = 44.81] [color={rgb, 255:red, 0; green, 0; blue, 0 }  ][fill={rgb, 255:red, 0; green, 0; blue, 0 }  ][line width=0.75]      (0, 0) circle [x radius= 1.34, y radius= 1.34]   ;
				
				\draw (339.33,22.33) node [anchor=north west][inner sep=0.75pt]   [align=left] {$A$};
				\draw (343,260) node [anchor=north west][inner sep=0.75pt]   [align=left] {$B$};
				\draw (410,48) node [anchor=north west][inner sep=0.75pt]  [font=\scriptsize] [align=left] {$u_0$};
				\draw (454,60) node [anchor=north west][inner sep=0.75pt]  [font=\scriptsize] [align=left] {$u_1$};
				\draw (475,93) node [anchor=north west][inner sep=0.75pt]  [font=\scriptsize] [align=left] {$u_2$};
				\draw (412.27,245) node [anchor=north west][inner sep=0.75pt]  [font=\scriptsize] [align=left] {$u_k$};
				\draw (454,247.67) node [anchor=north west][inner sep=0.75pt]  [font=\scriptsize] [align=left] {$u_{k-1}$};
				\draw (474.67,216) node [anchor=north west][inner sep=0.75pt]  [font=\scriptsize] [align=left] {$u_{k-2}$};
				\draw (344.67,77.67) node [anchor=north west][inner sep=0.75pt]   [align=left] {$\pi_1$};
				\draw (338.67,198.67) node [anchor=north west][inner sep=0.75pt]   [align=left] {$\pi_2$};
				\draw (335.33,145) node [anchor=north west][inner sep=0.75pt]   [align=left] {$v$};			
			\end{tikzpicture}
			\caption{The illustration of Lemma \ref{mainprop1}}\label{Fig:com}
\end{figure}
		\end{proof}

\begin{lemma}\label{lem-3}
Let $ G $ be a doubly connected chordal graph. Then each nice partition of $\A_G$ has exactly one part of size $1$.
\end{lemma}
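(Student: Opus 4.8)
The plan is to argue by induction on the number $n$ of vertices, using Lemma~\ref{mainprop1} to peel off a single vertex whose incident edges form exactly one part of $\pi$, and to transport the partition along a localization so that the inductive hypothesis applies.

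First I would fix the global picture. Since $G$ is connected, $r(L(\mathcal{A}_G)) = n-1$, so $\pi = \{\pi_1, \dots, \pi_{n-1}\}$ has exactly $n-1$ parts. By Lemma~\ref{mainprop1} each part $\pi_i$ is a star centered at a vertex $v_i$, and the $v_i$ are pairwise distinct; hence among the $n$ vertices of $G$ exactly one, say $w$, is not a center. Moreover Lemma~\ref{lem2} shows that any two leaves of $\pi_i$ are adjacent, so the center together with its leaves forms a clique. Now orient every edge of $G$ away from the center of the part containing it; I claim the resulting orientation $D$ is acyclic. Indeed, a directed cycle $v_{i_1}\to v_{i_2}\to\cdots\to v_{i_k}\to v_{i_1}$ would consist of edges lying in the pairwise distinct parts $\pi_{i_1},\dots,\pi_{i_k}$, hence would be a $k$-section of $\pi$; the independence of $\pi$ would make this $k$-section independent, i.e. the $k$ edges independent in the graphic matroid of $G$, contradicting that a cycle is dependent.

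Since $D$ is acyclic it has a source $u$, a vertex with no incoming edge. Then every edge at $u$ is owned by $u$, so after relabeling we may assume $u = v_1$, the part $\pi_1$ is precisely the set of all edges incident to $u$, and no part $\pi_i$ with $i\ge 2$ meets $u$. By Lemma~\ref{lem2} the neighbours of $u$ form a clique, so $u$ is simplicial. This $u$ is the vertex I delete, and for the induction I record two facts about $G' = G - u$. First, $G'$ is again doubly connected: if $G'$ had a cut vertex $c$, then the clique $N(u)\setminus\{c\}$ would lie in a single component of $G'-c$, so in $G-c$ the vertex $u$ would attach only to that component, making $G-c$ disconnected and contradicting the double connectivity of $G$; the case $|V(G')|\le 2$ serves as the base of the induction. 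Second, $G'$ is an induced subgraph of a chordal graph and hence chordal.

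Finally I would transport the partition. Since $G-u$ is connected, the set partition $\{\{u\},\, V\setminus\{u\}\}$ determines a flat $X\in L(\mathcal{A}_G)$ with $\mathcal{A}_X=\mathcal{A}_{G'}$, so $\pi_X$ is a nice partition of $\mathcal{A}_{G'}$ by \cite[Corollary 2.11]{HT1992}. As $u$ is a source, no edge of any part $\pi_i$ with $i\ge 2$ touches $u$, whence $\pi_X=\{\pi_2,\dots,\pi_{n-1}\}$ with all sizes unchanged. Applying the inductive hypothesis to the doubly connected chordal graph $G'$ yields exactly one singleton among $\pi_2,\dots,\pi_{n-1}$, while $|\pi_1|=\deg(u)\ge 2$ because double connectivity forces minimum degree at least $2$. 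Hence $\pi$ has exactly one part of size $1$. The main obstacle is the reduction step itself, namely verifying that deleting the simplicial source $u$ keeps the graph doubly connected and that this deletion is realized by the localization $\mathcal{A}_X$, since it is precisely the niceness of $\pi_X$ that lets me invoke the inductive hypothesis and close the argument.
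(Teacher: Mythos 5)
Your proof is correct, but it takes a genuinely different and heavier route than the paper's. The paper proves Lemma~\ref{lem-3} with a short, non-inductive counting argument: existence of a singleton part follows from applying the definition of niceness to $X = T$, and for uniqueness, if $\pi_1 = \{H_{v_1u_1}\}$ and $\pi_2 = \{H_{v_2u_2}\}$ were both singletons, then the reasoning of Lemma~\ref{mainprop1}(2) shows that $u_1$ and $u_2$ are distinct from each other and from all of the $n-1$ pairwise distinct centers $v_1, \ldots, v_{n-1}$, forcing $G$ to have at least $n+1$ vertices, a contradiction. Your argument instead proceeds by induction on $n$: you orient each edge away from the center of its part, prove acyclicity from independence of sections, take a source $u$ (which must be a center, since otherwise every edge at $u$ would be incoming), observe that the edges at $u$ form exactly one part and that $u$ is simplicial by Lemma~\ref{lem2}, check that $G-u$ stays doubly connected and chordal, and pass to the localization $\A_X = \A_{G-u}$, whose induced partition is nice by \cite[Corollary 2.11]{HT1992} and leaves the sizes of $\pi_2,\ldots,\pi_{n-1}$ unchanged; the bound $|\pi_1| = \deg(u) \ge 2$ then closes the count. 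All of these steps are sound, including the two reduction checks you flagged as the main obstacles. The notable feature is that your orientation-plus-source construction is precisely the machinery the paper reserves for the proof of Theorem~\ref{main3}, where it is used to extract a simplicial elimination ordering from a nice partition; so your route buys a more structural, constructive picture (you effectively obtain the elimination-order description of the partition as a by-product) at the cost of re-deriving that machinery and of extra graph-theoretic bookkeeping, whereas the paper's route gets the lemma almost for free from Lemma~\ref{mainprop1} alone and defers the orientation trick to the one place where it is indispensable.
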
	
\begin{proof}
Let $\pi = \{\pi_1, \pi_2, \ldots, \pi_{n-1}\}$ be a nice partition of $\A_G$,
and let $v_i$ be the common vertex for the edges in $\pi_i$ as used in Lemma \ref{mainprop1}.
The existence of a part of size $1$ in $\pi$ follows directly from the definition of a nice partition.
For uniqueness, assume to the contrary that $|\pi_1| = |\pi_2| = 1$.
Let $\pi_1 = \{H_{v_1u_1}\}$ and $\pi_2 = \{H_{v_2u_2}\}$.
Then, by applying the same reasoning in the proof of Lemma \ref{mainprop1}, we have $\{u_1, u_2\} \cap \{v_1, v_2, \ldots, v_{n-1}\} = \emptyset$,
which implies that $G$ contains at least $ n + 1$ vertices, a contradiction.
\end{proof}
	
	For a connected graph $G$ with doubly connected blocks $G_1, G_2, \ldots, G_k$, the following lemma allows us to construct a modular chain in $L(\mathcal{A}_G)$ from those of $L(\mathcal{A}_{G_1}), \ldots, L(\mathcal{A}_{G_k})$. We provide a proof for completeness; see \cite[Exercise (18), page~447]{PRS2007} and \cite[Proposition 2.5]{HR2014} for slightly different statements of this result.

\begin{lemma}\label{lem-4}
Let $\mathcal{A}_1$ and $\mathcal{A}_2$ be hyperplane arrangements in vector spaces $V_1$ and $V_2$, respectively.
If $X$ is a modular element in $L(\mathcal{A}_1)$ and $Y$ is a modular element in $L(\mathcal{A}_2)$,
then $X \oplus Y$ is modular in $L(\mathcal{A}_1 \times \mathcal{A}_2)$ under the isomorphism $\pi$ as defined in \eqref{iso}.
\end{lemma}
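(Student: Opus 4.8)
The plan is to push the modularity condition through the lattice isomorphism $\sigma$ of \eqref{iso} and to exploit the fact that, under this isomorphism, both the lattice operations and the rank function decompose across the two factors. The proof rests on two structural observations about the product lattice. First, since $\sigma$ is a lattice isomorphism, the join and meet in $L(\mathcal{A}_1\times\mathcal{A}_2)$ are computed coordinatewise; indeed, writing the join in $L(\mathcal{A})$ as intersection, one verifies directly that $(X_1\oplus X_2)\cap(Z_1\oplus Z_2)=(X_1\cap Z_1)\oplus(X_2\cap Z_2)$, so the join is coordinatewise, and the meet is then forced to be coordinatewise as well. Second, the rank function is additive along the product: from $r(X_1\oplus X_2)=\dim(V_1\oplus V_2)-\dim(X_1\oplus X_2)$ one obtains $r(X_1\oplus X_2)=r(X_1)+r(X_2)$.

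Granting these facts, the modularity of $X\oplus Y$ reduces to a short computation. Let $Z\in L(\mathcal{A}_1\times\mathcal{A}_2)$ be arbitrary and write $Z=Z_1\oplus Z_2$ with $Z_1\in L(\mathcal{A}_1)$ and $Z_2\in L(\mathcal{A}_2)$. Using the coordinatewise operations together with the additivity of rank,
\begin{align*}
r\big((X\oplus Y)\vee Z\big)+r\big((X\oplus Y)\wedge Z\big)
&=\big[r(X\vee Z_1)+r(X\wedge Z_1)\big]\\
&\quad+\big[r(Y\vee Z_2)+r(Y\wedge Z_2)\big].
\end{align*}
Now I would apply the modularity of $X$ in $L(\mathcal{A}_1)$ to the first bracket and the modularity of $Y$ in $L(\mathcal{A}_2)$ to the second, turning the right-hand side into $\big[r(X)+r(Z_1)\big]+\big[r(Y)+r(Z_2)\big]$, which by additivity equals $r(X\oplus Y)+r(Z)$. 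Since $Z$ was arbitrary, this establishes that $X\oplus Y$ is modular.

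The computation itself is routine once the two structural facts are in place, so the only point requiring genuine care is the first observation, namely that meets, and not merely joins, are computed coordinatewise in $L(\mathcal{A}_1\times\mathcal{A}_2)$. Because the meet in a geometric lattice is defined as an intersection over all common upper bounds rather than by an explicit formula, I would not attempt to compute it directly; instead I would deduce it from the fact, recorded in \eqref{iso}, that $\sigma$ is an isomorphism of lattices, so that it necessarily preserves the meet operation and hence transports the coordinatewise meet of $L(\mathcal{A}_1)\times L(\mathcal{A}_2)$ to the meet of $L(\mathcal{A}_1\times\mathcal{A}_2)$. Everything else follows from elementary linear algebra and the definition of modularity.
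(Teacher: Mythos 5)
Your proof is correct, and it takes a genuinely different route from the paper's. The paper does not touch ranks at all: it invokes the criterion (quoted from \cite[Corollary 2.26]{POHT1992}) that $Z \in L(\A)$ is modular if and only if $Z + W \in L(\A)$ for every $W \in L(\A)$, where $+$ denotes the sum of subspaces. With that criterion the lemma is a one-line computation, because subspace sums split across the direct sum decomposition: writing $W = X' \oplus Y'$, one has $(X \oplus Y) + (X' \oplus Y') = (X + X') \oplus (Y + Y')$, and each summand lies in the corresponding factor lattice by modularity of $X$ and $Y$. You instead verify the defining rank identity $r(Z \vee W) + r(Z \wedge W) = r(Z) + r(W)$ directly, which needs three structural facts: joins in $L(\A_1 \times \A_2)$ are coordinatewise (immediate, since join is intersection of subspaces), meets are coordinatewise (which you rightly deduce from the lattice isomorphism $\sigma$ of \eqref{iso} transporting the componentwise meet of $L(\A_1) \times L(\A_2)$, rather than by attempting a direct computation), and rank is additive, $r(Z_1 \oplus Z_2) = r(Z_1) + r(Z_2)$, by a dimension count. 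All three facts hold, your computation then closes correctly, and the decomposition $Z = Z_1 \oplus Z_2$ of an arbitrary element is justified by the surjectivity of $\sigma$, so there is no gap. As for what each approach buys: the paper's subspace-sum criterion sidesteps meets and ranks entirely and is the shortest path in this linear-algebraic setting, while your argument is essentially lattice-theoretic once the coordinatewise operations and rank additivity are in place, and thus it establishes the more general fact that modular elements of two geometric lattices yield a modular element of their product.
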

	
\begin{proof}
A well-known result, \cite[Corollary 2.26]{POHT1992}, states that an element $ Z \in L(\A) $ is modular if and only if $ Z + W \in L(\A) $ for all $ W \in L(\A) $.
Based on this, within the context of this proposition, take any $ W \in L(\A_1 \times \A_2) $. Suppose $ W = X' \oplus Y' $, where $ X' \in L(\A_1) $ and $ Y' \in L(\A_2) $. Thus, we have
\[
(X \oplus Y) + W = (X \oplus Y) + (X' \oplus Y') = (X + X') \oplus (Y + Y').
\]
Since $ X \in L(\A_1) $ and $ Y \in L(\A_2) $ are modular, we have $ X + X' \in L(\A_1) $ and $ Y + Y' \in L(\A_2) $. Therefore, $(X \oplus Y) + W \in L(\A_1 \times \A_2)$, which completes the proof.
\end{proof}	

Ultimately, we arrive at the proof of Theorem \ref{main3}.

	\begin{proof}[Proof of Theorem \ref{main3}]
Assume that $\A = \A_G$. Then the graph $G$ is chordal by Theorem \ref{chordal-nice}.
We first address the case when $G$ is doubly connected.
By Lemma \ref{lem-3}, we may assume without loss of generality that $\pi_1$ is the only part of size $1$.
Using the notation of Lemma \ref{mainprop1}, let $v_i$ be the common vertex of all edges in $\pi_i$;
for the single edge in $\pi_1$, we arbitrarily fix one of its two possible vertices.
We then orient $G$ to get a directed graph $D(G)$ as follows: for any $i \in \{1, \ldots, n-1\}$, if $H_{v_iu} \in \pi_i$, then the edge $v_iu$ is directed from $v_i$ to $u$.
We claim that there are no directed cycles in $D(G)$; otherwise, consider a directed cycle $v_{i_1} \rightarrow v_{i_2} \rightarrow \cdots \rightarrow v_{i_k} \rightarrow v_{i_1}$ in $D(G)$.
Clearly, this comes from the undirected cycle $[v_{i_1}, v_{i_2}, \ldots, v_{i_k}, v_{i_1}]$ of $G$. Then $H_{v_{i_j}v_{i_{j+1}}} \in \pi_{i_j}$ for any $j = 1, \ldots k-1$ and $H_{v_{i_k}v_{i_{1}}} \in \pi_{i_k}$ by Lemma \ref{mainprop1}. Therefore, the tuple $(H_{v_{i_1}v_{i_2}}, H_{v_{i_2}v_{i_3}}, \ldots, H_{v_{i_k}v_{i_1}})$ is a $k$-section of $\pi$, which must be independent since $\pi$ is a nice partition.
This contradicts the fact that $[v_{i_1}, v_{i_2}, \ldots, v_{i_k}, v_{i_1}]$ is a cycle in $G$. Thus, our claim holds. 
Consequently, there exists an ordering $\sigma\colon \sigma_1\prec \sigma_2\prec \ldots\prec \sigma_{n}$ of the vertex set $[n]$ such that for every edge $\sigma_i \sigma_j$ in $D(G)$ directed from $\sigma_i$ to $\sigma_j$, we have $\sigma_i \prec \sigma_j$; see \cite[Exercise 1.4.14]{DB2002}.
Together with Lemma \ref{lem2}, $\sigma$ is a simplicial elimination ordering.
Combining Theorem \ref{chordal-nice} with Stanley's Proposition \ref{stanley4.10}, we can construct a modular chain as follows.
For each $i \in \{1, \ldots, n-1\}$, choose any hyperplane $H^i$ corresponding to an outgoing edge from the vertex $\sigma_{i}$. Then, let
\[
X_0 = V \quad \text{and} \quad X_i = X_{i-1} \cap H^{n-i} \quad \text{for } i \in \{1, \ldots, n-1\}.
\]
This yields a maximal modular chain $\mathcal{C}\colon X_0 < X_1 < X_2< \cdots  < X_{n-1}$ of $L(\A_G)$, which induces the nice partition $\pi$.
For a concrete example, refer to Example \ref{ex:mc}.

Let $ G $ be a simple graph with doubly connected blocks $ G_1, G_2, \ldots, G_k $.
Then we have
\[L(\A_G) = L(\A_{G_1}) \times L(\A_{G_2}) \times \cdots \times L(\A_{G_k}).
\]
For each $ 1 \leq i \leq k $,
let $\pi^i$ denote the nice partition of $\A_{G_i}$ induced by $\pi$ naturally.
The previous discussion implies the existence of a maximal modular chain
$\mathcal{C}^i$ corresponding to $\pi^i$.
Lemma \ref{lem-4} ensures that the maximal modular chain of $ L(\A_G) $ induced by $\mathcal{C}^1, \mathcal{C}^2, \ldots, \mathcal{C}^k$ is indeed modular, and it precisely corresponds to the nice partition $\pi$.
\end{proof}

	\begin{exam}\label{ex:mc}
Figure \ref{mc} (a) illustrates the chordal graph $G$ along with a nice partition of $\A_G$.
Let $ \pi = \{\pi_1,\pi_2,\pi_3,\pi_4 \} $ denote the nice partition of $\A_G$, where $\pi_1 = \{H_{34}\}$, $\pi_2 = \{H_{35}, H_{45}\}$, $\pi_3 = \{H_{13}, H_{14}, H_{15}\}$, and $\pi_4 = \{H_{12}, H_{23}, H_{25}\}$.
In the figure, these parts are represented by green, yellow, blue, and red, respectively, with their common vertices being $4$, $5$, $1$, and $2$. The directed version $ D(G) $, shown in Figure \ref{mc} (b), contains no directed cycles, thus providing a perfect elimination order: $2 \prec 1 \prec 5 \prec 4 \prec 3$.
Now, take $X_1=H_{34}$, $X_2=X_1\cap H_{45}$, $X_3=X_2\cap H_{15}$, and $X_4=X_3 \cap H_{12}$, yielding a maximal modular chain $V < X_1 < X_2 < X_3 < X_4$ of $L(\A_G)$.
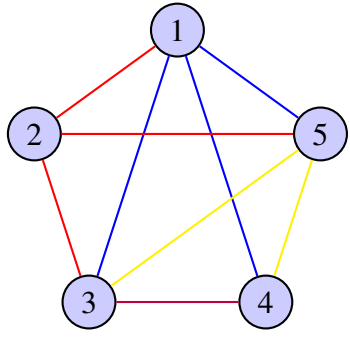
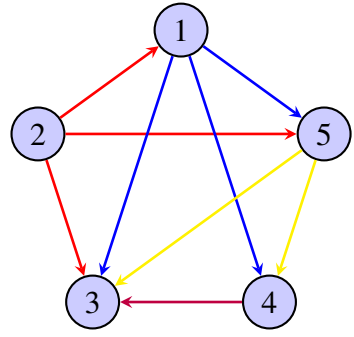
\begin{figure}[ht]
  \centering
  \begin{subfigure}[b]{0.45\textwidth}
    \centering
    \begin{tikzpicture}[thick, scale=0.9]
      \tikzstyle{vertex} = [circle, draw, fill=blue!20, minimum size=20pt, inner sep=0pt]
      \node[vertex] (1) at (90:2) {1};
      \node[vertex] (2) at (162:2) {2};
      \node[vertex] (3) at (234:2) {3};
      \node[vertex] (4) at (306:2) {4};
      \node[vertex] (5) at (18:2) {5};
      \draw[red] (1) -- (2);
      \draw[blue] (1) -- (3);
      \draw[blue] (1) -- (4);
      \draw[blue] (1) -- (5);
      \draw[red] (2) -- (3);
      \draw[red] (2) -- (5);
      \draw[yellow] (3) -- (5);
      \draw[green] (3) -- (4);
      \draw[yellow] (4) -- (5);
    \end{tikzpicture}
    \caption{The chordal graph $G$ and a nice partition of $\A_G$}
  \end{subfigure}
  \hfill
  \begin{subfigure}[b]{0.45\textwidth}
    \centering
    \begin{tikzpicture}[thick, scale=0.9]
      \tikzstyle{vertex} = [circle, draw, fill=blue!20, minimum size=20pt, inner sep=0pt]
      \node[vertex] (1') at (90:2) {1};
      \node[vertex] (2') at (162:2) {2};
      \node[vertex] (3') at (234:2) {3};
      \node[vertex] (4') at (306:2) {4};
      \node[vertex] (5') at (18:2) {5};
      \draw[red, ->, >=stealth, line width=1pt] (2') -- (1');
      \draw[red, ->, >=stealth, line width=1pt] (2') -- (5');
      \draw[red, ->, >=stealth, line width=1pt] (2') -- (3');
      \draw[blue, ->, >=stealth, line width=1pt] (1') -- (3');
      \draw[blue, ->, >=stealth, line width=1pt] (1') -- (4');
      \draw[blue, ->, >=stealth, line width=1pt] (1') -- (5');
      \draw[yellow, ->, >=stealth, line width=1pt] (5') -- (3');
      \draw[yellow, ->, >=stealth, line width=1pt] (5') -- (4');
      \draw[green, ->, >=stealth, line width=1pt] (4') -- (3');
    \end{tikzpicture}
    \caption{The orientation $ D(G) $ of the chordal graph $ G $}
  \end{subfigure}
  \caption{Illustration of Example \ref{ex:mc}}\label{mc}
\end{figure}

For a general example, see Figure \ref{fig:cg},
where $ G $ has two doubly connected components $ G_1 $ and $ G_2 $, arranged from left to right.
The nice partition $ \pi $ of $\A_G$ given in different colors comes from the nice partitions $\pi^1$ and $\pi^2$ of $G_1$ and $G_2$, respectively.
The nice partitions of $G_1$ and $G_2$ are
\begin{align*}
\pi^1&= \{\{H_{34}\},\{H_{35},H_{45}\},\{H_{13},H_{14},H_{15}\},\{H_{12},H_{25},H_{23}\}\},\\[5pt]
\pi^2&=\{\{H_{67}\},\{H_{56},H_{57}\},\{H_{58},H_{78}\}\}
\end{align*}
with $\pi=\pi^1\cup \pi^2$.
Thus, we can construct a maximal modular chain as follows:
\begin{align*}
X_0 &= V,                 & X_1 &= X_0 \cap H_{34},   & X_2 &= X_1 \cap H_{35},   & X_3 &= X_2 \cap H_{14}, \\[5pt]
X_4 &= X_3 \cap H_{23},   & X_5 &= X_4 \cap H_{67},   & X_6 &= X_5 \cap H_{57},   & X_7 &= X_6 \cap H_{58}.
\end{align*}
Hence, we see that
$ X_0 < X_1 <  \cdots < X_7$
is a maximal modular chain of $L(\A_{G})$,
which induces the nice partition $\pi$ of $\A_G$.	
	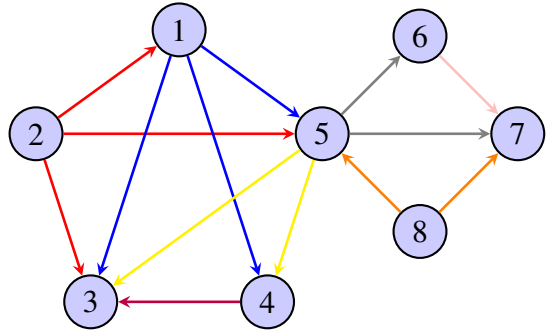
\begin{figure}[H]
	  \centering
	\begin{tikzpicture}[thick, scale=0.9]
  \tikzstyle{vertex} = [circle, draw, fill=blue!20, minimum size=20pt, inner sep=0pt]
  \node[vertex] (1') at (90:2) {1};
  \node[vertex] (2') at (162:2) {2};
  \node[vertex] (3') at (234:2) {3};
  \node[vertex] (4') at (306:2) {4};
  \node[vertex] (5') at (18:2) {5};

  \node[vertex] (7') at ($(5') + (2.6, 0)$) {7};
  \node[vertex] (8') at ($(5') + (1.3, -1.3)$) {8};
  \node[vertex] (6') at ($(5') + (1.3, 1.3)$) {6};

  \draw[red, ->, >=stealth, line width=1pt] (2') -- (1');
  \draw[red, ->, >=stealth, line width=1pt] (2') -- (5');
  \draw[red, ->, >=stealth, line width=1pt] (2') -- (3');
  \draw[blue, ->, >=stealth, line width=1pt] (1') -- (3');
  \draw[blue, ->, >=stealth, line width=1pt] (1') -- (4');
  \draw[blue, ->, >=stealth, line width=1pt] (1') -- (5');
  \draw[yellow, ->, >=stealth, line width=1pt] (5') -- (3');
  \draw[yellow, ->, >=stealth, line width=1pt] (5') -- (4');
  \draw[green, ->, >=stealth, line width=1pt] (4') -- (3');

  \draw[black, ->, >=stealth, line width=1pt] (5') -- (6');
  \draw[black, ->, >=stealth, line width=1pt] (5') -- (7');
  \draw[pink, ->, >=stealth, line width=1pt] (8') -- (5');
  \draw[pink, ->, >=stealth, line width=1pt] (8') -- (7');
  \draw[orange, ->, >=stealth, line width=1pt] (6') -- (7');
\end{tikzpicture}
  \caption{The second illustration of Example \ref{ex:mc}}\label{fig:cg}
	\end{figure}
\end{exam}

\section{The converses of two results by Orlik and Terao}\label{sec-4}

In this section, we will successively prove Theorems \ref{main2} and \ref{main1}, which are the converses of two results by Orlik and Terao.

\begin{proof}[Proof of Theorem \ref{main2}]
	The sufficiency of this conclusion is established in \cite[Corollary 3.88, p.~85]{POHT1992}, noting that $\pi_X$ is a nice partition of $\mathcal{A}_X$ for each $X \in L(\mathcal{A})$.
	We therefore only need to prove the necessity.
	Assume that for any $X \in L(\A)$,
	\begin{equation}\label{eq:chi}
		\chi(\A_X, t) = t^{n-\ell}\prod_{i=1}^{\ell}(t - |\pi_i \cap \A_X|).
	\end{equation}
	Since $\A_X$ is central, we have $\chi(\A_X, 1) = 0$.
	Therefore, there exists some $i \in \{1, \ldots, \ell\}$ such that $|\pi_i \cap \A_X| = 1$.
	On the other hand, for any $p$-section $S$ of $\pi$,
	by taking $X = \cap S$ in \eqref{eq:chi},
	we obtain that
	\[
	\chi(\A_{\cap S}, t) = t^{n-\ell}\prod_{i=1}^{\ell}(t - |\pi_i \cap \A_{\cap S}|).
	\]
	This implies that
	\[
	r(\cap S) = |\{i \mid \pi_i \cap \A_{\cap S} \neq \emptyset\}| \geq |S|,
	\]
	where the equality follows from a basic fact on characteristic polynomials,
	see \cite[Exercise (6), page~401]{PRS2007};
	and naturally, $r(\cap S) \leq |S|$ holds.
	Combining these, we get $r(\cap S) = |S|$, which means $S$ is independent. Consequently, $\pi$ is independent, and thus nice.
\end{proof}

To prove Theorem \ref{main1}, we first provide an equivalent characterization of modular elements based on \cite[Theorem 3.2]{TB1975} by Brylawski.

\begin{lemma}\label{2.1}
	An element $ X \in L(\mathcal{A}) $ is modular if and only if $ \mathcal{A}_X \cap \mathcal{A}_Y \neq \emptyset $ for any $ Y \in L(\mathcal{A})$ with $r(Y) = r - r(X) + 1$.
\end{lemma}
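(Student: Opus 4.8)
The plan is to translate the arrangement-theoretic condition into a statement about the lattice meet, dispatch the ``modular $\Rightarrow$ condition'' direction by a one-line rank count, and treat the converse by a constructive extension argument in the geometric lattice $L(\mathcal{A})$.

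First I would record the translation. A hyperplane $H$ lies in $\mathcal{A}_X\cap\mathcal{A}_Y$ exactly when $X\subseteq H$ and $Y\subseteq H$, i.e. when $H$ contains $X+Y$; since $X\wedge Y$ is by definition the smallest element of $L(\mathcal{A})$ containing both $X$ and $Y$, this gives $\mathcal{A}_X\cap\mathcal{A}_Y=\mathcal{A}_{X\wedge Y}$. As no proper hyperplane contains the whole space $V$, we have $\mathcal{A}_{X\wedge Y}\neq\emptyset$ if and only if $X\wedge Y\neq V$, that is $r(X\wedge Y)\ge 1$. Writing $s=r(X)$, the asserted condition becomes: $r(X\wedge Y)\ge 1$ for every $Y\in L(\mathcal{A})$ with $r(Y)=r-s+1$.

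For the forward implication, suppose $X$ is modular and take any such $Y$. Modularity yields
\[ r(X\wedge Y)=r(X)+r(Y)-r(X\vee Y)=r+1-r(X\vee Y)\ge 1, \]
because $X\vee Y\le T$ forces $r(X\vee Y)\le r$. Hence $X\wedge Y\neq V$ and $\mathcal{A}_X\cap\mathcal{A}_Y\neq\emptyset$. For the converse I would argue by contraposition: assuming $X$ is not modular, I construct a flat $Y$ with $r(Y)=r-s+1$ and $X\wedge Y=V$. Here I use two standard facts about geometric lattices, namely that every interval is again geometric (hence complemented), and that $X\wedge Y=V$ is equivalent to $X$ and $Y$ sharing no atom. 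Step one: starting from a flat $Z_0$ with positive modular defect $d(Z_0):=r(X)+r(Z_0)-r(X\vee Z_0)-r(X\wedge Z_0)>0$, I replace $Z_0$ by a complement $Z$ of $U:=X\wedge Z_0$ inside $[V,Z_0]$; then $X\wedge Z\le U\wedge Z=V$, while $Z\le Z_0$ gives $r(X\vee Z)\le r(X\vee Z_0)$ and hence $d(Z)\ge d(Z_0)>0$. So I may assume $X\wedge Z=V$ and $r(X\vee Z)<s+r(Z)$. Step two: as long as $X\vee Z\neq T$, I pick an atom $p\not\le X\vee Z$ and pass to $Z\vee p$; the exchange property guarantees $Z\vee p$ still meets $X$ only in $V$ (any atom $q\le Z\vee p$ with $q\le X$ would force $p\le Z\vee q\le X\vee Z$, a contradiction), and since both $r(Z)$ and $r(X\vee Z)$ rise by one the defect is unchanged. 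Iterating until $X\vee Z=T$ produces a flat $Z$ with $X\wedge Z=V$ and $r(Z)=r-s+d(Z)\ge r-s+1$; passing to a rank-$(r-s+1)$ sub-flat of $Z$ (read off a basis of $Z$) gives a flat still disjoint from $X$, which is the required $Y$.

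The main obstacle is the converse, specifically forcing the rank up to \emph{exactly} $r-s+1$ while preserving disjointness from $X$; the two-step device above---first clearing the meet by a complement in the interval $[V,Z_0]$, then enlarging the flat by points lying outside $X\vee Z$---is precisely what makes this possible, and it is in effect the content of Brylawski's Theorem~3.2 \cite{TB1975} that we invoke. Finally I would verify the boundary cases $r(X)\in\{0,r\}$, where the condition is vacuous and immediate, respectively.
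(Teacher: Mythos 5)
Your proposal is correct, and its converse direction takes a genuinely different route from the paper. The forward implication is the same rank count in both (the paper phrases it as a contradiction, you compute $r(X\wedge Y)=r(X)+r(Y)-r(X\vee Y)\ge 1$ directly). For the converse, the paper quotes Brylawski's Theorem~3.2 --- $X$ is modular if and only if all complements of $X$ in $L(\mathcal{A})$ are pairwise incomparable --- and then shows that under the hypothesis every complement $Z$ of $X$ satisfies $r(Z)=r-r(X)$ (the hypothesis gives $r(Z)\le r-r(X)$, semimodularity gives the reverse), so complements are incomparable and Brylawski applies. You instead argue by contraposition and construct, from a witness $Z_0$ of non-modularity, an explicit flat of rank exactly $r-r(X)+1$ meeting $X$ in $V$: first clear the meet by replacing $Z_0$ with a complement of $X\wedge Z_0$ in the geometric interval $[V,Z_0]$ (the defect can only grow, by semimodularity applied in that interval), then grow the flat atom by atom outside $X\vee Z$ --- the MacLane--Steinitz exchange argument correctly shows disjointness from $X$ is preserved and the defect is constant --- and finally truncate to a rank-$(r-r(X)+1)$ subflat spanned by part of a basis. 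All steps check out; in effect you have reproved the one implication of Brylawski's theorem that the paper uses, so your parenthetical claim that you ``invoke'' Brylawski is misleading --- your argument is self-contained. The trade-off: the paper's proof is shorter by outsourcing the lattice-theoretic work to the cited theorem, while yours is longer but elementary, resting only on standard facts about geometric lattices (intervals are geometric, complementation, the exchange property), and would survive in a context where Brylawski's result is unavailable.
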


\begin{proof}
	Let $ X \in L(\mathcal{A}) $ be a modular element.
	Assume for contradiction that there exists $ Y \in L(\mathcal{A}) $ with $ r(Y) = r - r(X) + 1 $ such that $ \mathcal{A}_X \cap \mathcal{A}_Y = \emptyset $.
	Combining this with the definition of modular element,
	we see that
	\[
	r + 1 = r(X) + r(Y) = r(X \wedge Y) + r(X \vee Y) = 0 + r(X \vee Y),
	\]
	which contradicts the fact that $ r(\mathcal{A}) = r $.
	
	Conversely, \cite[Theorem 3.2]{TB1975} states that $ X \in L(\mathcal{A}) $ is modular if and only if all complements of $ X $ in $ L(\mathcal{A}) $ are incomparable.
	Hence, it suffices to show that under the condition $ \mathcal{A}_X \cap \mathcal{A}_Y \neq \emptyset $ for any $ Y \in L(\mathcal{A}) $ with $r(Y) = r - r(X) + 1$,
	all complements of $X$ in $L(\A)$ have the same rank $ r-r(X) $, and are therefore naturally incomparable.
	Let $ Z $ be any complement of $ X $, i.e., $ Z $ satisfies $ X \wedge Z = V $ and $ X \vee Z = T $.
	On the one hand, it is obvious from our condition that if $ X \wedge Z = V $, then $ r(Z) \leq r - r(X) $. On the other hand,
	 since $ X \vee Z = T$, the semimodularity of $L(\mathcal{A})$ yields
	\[
	r(Z) + r(X) \geq r(X \vee Z) + r(X \wedge Z) = r,
	\]
	which implies $r(Z) \geq r - r(X)$, as desired. This completes the proof.
\end{proof}

\begin{proof}[Proof of Theorem \ref{main1}]
	For any element $X_k$ from  $\mathcal{C}$, we have $r(X_k) = k$.
	By Lemma \ref{2.1},
	we proceed to show that $\mathcal{A}_{X_k} \cap \mathcal{A}_Y \neq \emptyset$ for any $Y \in L(\mathcal{A})$ with $r(Y) = r - r(X) + 1$. Indeed, for such $Y$, we have
	$
	|\pi_Y| = r(Y) = r-k+1.
	$
	An application of the {Pigeonhole Principle} implies that there exists at least one $i \leq k$ such that $\pi_i \cap \mathcal{A}_Y \neq \emptyset$. Consequently, $\mathcal{A}_{X_{k}} \cap \mathcal{A}_Y \neq \emptyset$, which completes the proof.
\end{proof}


Note that a geometric lattice version of Theorem~\ref{main1} follows from Hallam and Sagan's quotient poset approach~\cite[Theorem~18 and Proposition~22]{JB2015}. Our proof is more direct and avoids the use of the general machinery.

\section*{Acknowledgments}
This work was done under the auspices of the National Science Foundation of China (12101613), and by Guangdong Basic and Applied Basic Research Foundation (2025A15\\15010457).

\end{document}